\newtheorem{theorem}{Theorem}
\newtheorem{lemma}{Lemma}
\newcounter{minutes}\setcounter{minutes}{\time}
\newcounter{hours}\setcounter{hours}{\time}
\title{Tur\'an type inequalities for regular Coulomb wave functions}
\author[\'A. Baricz]{\'Arp\'ad Baricz}
\address{Institute of Applied Mathematics, \'Obuda University, Budapest, Hungary}
\address{Department of Economics, Babe\c{s}-Bolyai University, Cluj-Napoca, Romania}
\email{bariczocsi@yahoo.com}
\keywords{Coulomb wave functions; Mittag-Leffler expansion; Tur\'an, Mitrinovi\'c-Adamovi\'c, Wilker type
inequality; Coulomb zeta functions; complete monotonicity; interlacing property of zeros.}
\subjclass[2010]{Primary 33C15, Secondary 26D07, 39C05.}
\thanks{$^{\bigstar}$The research of \'A. Baricz was supported by the J\'anos Bolyai Research Scholarship of the Hungarian Academy of Sciences. The author is very grateful to Mourad E.H. Ismail for suggesting the study on the regular Coulomb wave function and also for his kind hospitality during the author's visit at Department of Mathematics of the City University of Hong Kong in September 2011. The author is also grateful to his colleague \'Agoston R\'oth for the discussions about the growth order of the regular Coulomb wave function.}
\begin{document}

\def\thefootnote{}
\footnotetext{ \texttt{File:~\jobname .tex,
          printed: \number\year-0\number\month-\number\day,
          \thehours.\ifnum\theminutes<10{0}\fi\theminutes}
} \makeatletter\def\thefootnote{\@arabic\c@footnote}\makeatother

\maketitle

\begin{abstract}
Tur\'an, Mitrinovi\'c-Adamovi\'c and Wilker type inequalities are deduced for regular Coulomb wave functions. The
proofs are based on a Mittag-Leffler expansion for the regular Coulomb wave function, which may be of independent interest.
Moreover, some complete monotonicity results concerning the Coulomb zeta functions and some interlacing properties
of the zeros of Coulomb wave functions are given.
\end{abstract}

\section{Introduction}
\setcounter{equation}{0}

The Coulomb wave function, which bears the name of the famous French physicist Charles Augustin de Coulomb (best known for his law describing the electrostatic interaction between electrically charged particles), is a solution of the Coulomb wave equation (or radial Schr\"odinger equation in the Coulomb potential) and it is used to describe the behavior of charged particles in a Coulomb potential. There is an extensive literature concerning the computation of the Coulomb wave function values, however, the zeros and other analytical properties have not been studied in detail. For more details we refer to the papers \cite{ikebe,miyazaki} and to the references therein. We mention that recently, an important study on the Coulomb wave function was made by \v{S}tampach and \v{S}\v{t}ov\'\i\v{c}ek \cite{stampach}. In this paper we present some new results on the Coulomb wave function, which may be useful for people working in special functions and mathematical physics. Our present paper belongs to the rich literature about Tur\'an type inequalities on orthogonal polynomials and special functions, named after the Hungarian mathematician Paul Tur\'an, and can be interpreted as the generalization of some of the results on Bessel functions of the first kind, obtained by Sz\'asz \cite{szasz1,szasz2}. The paper is organized as follows: the next section is divided into four subsections and contains some Tur\'an, Mitrinovi\'c-Adamovi\'c and Wilker type inequalities for the regular Coulomb wave function. The key tool in the proofs is a Mittag-Leffler expansion for the regular Coulomb wave function, which may be of independent interest. We also deduce some complete monotonicity results for the Coulomb zeta functions, which are defined by using the real zeros of the Coulomb wave functions. By using the Hadamard factorization of the Coulomb wave functions we also present some interlacing properties of the zeros of the Coulomb wave functions.

\section{Properties of the regular Coulomb wave functions}
\setcounter{equation}{0}

In this section our aim is to present the main results of this paper about the regular Coulomb wave function together with their proofs. The section is divided into four subsections.

\subsection{Tur\'an type inequalities for regular Coulomb wave functions} In order to obtain the main results of this subsection we use a Mittag-Leffler expansion for the regular Coulomb wave function together with the recurrence relations, and a result of Ross
\cite{ross}. As we can see below the second main result of this subsection
is a natural extension of a well-known result of Sz\'asz \cite{szasz1,szasz2} for Bessel functions of the
first kind. The next result, which may be of independent interest, is an immediate consequence of a result of Wimp \cite{wimp} concerning confluent hypergeometric functions and it was recently rediscovered by
\v{S}tampach and \v{S}\v{t}ov\'\i\v{c}ek \cite{stampach}, by using a different method. In both papers \cite{stampach,wimp}
a new class of orthogonal polynomials associated with regular Coulomb wave functions is introduced. These polynomials play a role analogous to that the Lommel polynomials have in the theory of Bessel functions of the first kind. However, it is worth to mention that Wimp's approach \cite{wimp} is based on inversion of Stieltjes transforms, while \v{S}tampach and \v{S}\v{t}ov\'\i\v{c}ek \cite{stampach} used the eigenvalues of some Jacobi matrices.

\begin{lemma}
Let $\rho,\eta\in\mathbb{R}$ and let $L>-3/2,$ $L\neq-1$ if $\eta\neq0$ and $L>-3/2$ if $\eta=0.$ Then the next Mittag-Leffler expansion is valid
\begin{equation}\label{Comitt}
\frac{F_{L+1}(\eta,\rho)}{F_{L}(\eta,\rho)}=\frac{L+1}{\sqrt{(L+1)^2+\eta^2}}\sum_{n\geq 1}\left[\frac{\rho}{x_{L,\eta,n}(x_{L,\eta,n}-\rho)}+\frac{\rho}{y_{L,\eta,n}(y_{L,\eta,n}-\rho)}\right],
\end{equation}
where $x_{L,\eta,n}$ and $y_{L,\eta,n}$ are the $n$th positive and negative zeros of the Coulomb wave
function $F_L(\eta,\rho).$
\end{lemma}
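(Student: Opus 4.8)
The plan is to treat the quotient $g(\rho)=F_{L+1}(\eta,\rho)/F_{L}(\eta,\rho)$ as a meromorphic function of $\rho$ and to recover \eqref{Comitt} as its Mittag-Leffler (partial-fraction) expansion. First I would record the two structural facts that make this viable. Since $F_{L}(\eta,\rho)$ solves the second-order Coulomb wave equation, it cannot vanish together with its derivative at the same point, so every zero of $F_{L}$ is simple; and under the stated restrictions on $L$ and $\eta$ all zeros of $F_{L}$ are real, so the poles of $g$ are precisely the simple poles sitting at the positive zeros $x_{L,\eta,n}$ and the negative zeros $y_{L,\eta,n}$. Dividing out the factor $\rho^{L+1}$ common to the power series of $F_{L}$ and $F_{L+1}$ shows that $g(\rho)=\rho\cdot(\text{ratio of two entire confluent hypergeometric functions})$, so $g$ is genuinely meromorphic and the branch ambiguity of $\rho^{L+1}$ cancels.

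Next I would compute the two ingredients of the expansion. From the leading behaviour $F_{L}(\eta,\rho)\sim C_{L}(\eta)\rho^{L+1}$ and $F_{L+1}(\eta,\rho)\sim C_{L+1}(\eta)\rho^{L+2}$ near the origin one reads off $g(0)=0$, so no constant term survives. For the residues I would invoke the recurrence relation
\begin{equation*}
\frac{\sqrt{(L+1)^2+\eta^2}}{L+1}\,F_{L+1}(\eta,\rho)=\left(\frac{L+1}{\rho}+\frac{\eta}{L+1}\right)F_{L}(\eta,\rho)-F_{L}'(\eta,\rho),
\end{equation*}
where the prime denotes differentiation in $\rho$. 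Evaluating at a zero $a$ of $F_{L}$ annihilates the first term on the right and yields $F_{L+1}(\eta,a)=-\tfrac{L+1}{\sqrt{(L+1)^2+\eta^2}}\,F_{L}'(\eta,a)$; since $\operatorname{Res}_{\rho=a}g=F_{L+1}(\eta,a)/F_{L}'(\eta,a)$, the residue of $g$ at \emph{every} pole equals the same constant $-\tfrac{L+1}{\sqrt{(L+1)^2+\eta^2}}$. Granting the expansion $g(\rho)=g(0)+\sum_{n}\operatorname{Res}_{n}\bigl[(\rho-a_{n})^{-1}+a_{n}^{-1}\bigr]$, the series then collapses after using $(\rho-a)^{-1}+a^{-1}=\rho/\bigl(a(\rho-a)\bigr)$ and splitting the zeros into the $x$- and $y$-families, giving exactly the right-hand side of \eqref{Comitt}.

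The step that requires real work, and the main obstacle, is justifying the Mittag-Leffler expansion in the above convergence-factor (genus-one) form. Because the Coulomb wave function is asymptotically a shifted, logarithmically distorted sine, its real zeros grow linearly, so $\sum_{n}a_{n}^{-2}$ converges while $\sum_{n}a_{n}^{-1}$ diverges; this is exactly why the correction terms $a_{n}^{-1}$ are needed and forces genus one. To make the argument rigorous I would establish the growth order of $F_{L}$ — namely that, after dividing out $\rho^{L+1}$, it is an entire function of $\rho$ of order one and finite type — and then show that $g$ stays bounded on a sequence of circles $|\rho|=R_{k}$ chosen to remain uniformly away from the zeros. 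Feeding this into the Cauchy/contour argument, the kernel $\rho/\bigl(z(z-\rho)\bigr)=O(|z|^{-2})$ forces $\tfrac{1}{2\pi i}\oint_{|z|=R_{k}}g(z)\,\tfrac{\rho}{z(z-\rho)}\,dz\to 0$ as $k\to\infty$, which is precisely the analytic content of the partial-fraction identity. The reality and distribution of the zeros together with the uniform lower bound for $|F_{L}|$ on these circles are the delicate points, and they are where the precise asymptotics of the Coulomb wave function enter.

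Finally I would note a cleaner alternative that reuses the same growth information. Writing the Hadamard factorization $F_{L}(\eta,\rho)=C_{L}(\eta)\rho^{L+1}e^{\eta\rho/(L+1)}\prod_{n}(1-\rho/a_{n})e^{\rho/a_{n}}$ — the linear exponent being pinned down by the normalization $g(0)=0$ — and taking the logarithmic derivative gives $F_{L}'/F_{L}=(L+1)/\rho+\eta/(L+1)+\sum_{n}\bigl[(\rho-a_{n})^{-1}+a_{n}^{-1}\bigr]$. Substituting this into the recurrence relation above, the two elementary terms cancel and one is left with \eqref{Comitt} directly. Either route reduces the lemma to a single nontrivial prerequisite, the genus-one growth order of the regular Coulomb wave function; once that is in hand, the recurrence relation does all of the algebraic work.
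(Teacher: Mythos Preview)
Your proposal is correct, and your ``cleaner alternative'' at the end is essentially identical to the paper's second proof: the paper logarithmically differentiates the Hadamard product
\[
F_{L}(\eta,\rho)=C_L(\eta)\rho^{L+1}e^{\eta\rho/(L+1)}\prod_{n\geq1}\left(1-\frac{\rho}{\rho_{L,\eta,n}}\right)e^{\rho/\rho_{L,\eta,n}}
\]
and feeds the result into the recurrence \eqref{reccou2}, exactly as you describe. Your observation that the linear exponent $\eta/(L+1)$ is pinned down by $g(0)=0$ is also correct and is implicit in the paper's use of the factorization.

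Where the approaches diverge is in how the analytic input is secured. The paper does not establish the growth order or the Mittag-Leffler bound from scratch; instead it either (i) quotes an identity of Wimp for ratios of confluent hypergeometric functions and specializes the parameters, or (ii) quotes the Hadamard factorization above from \v{S}tampach--\v{S}\v{t}ov\'\i\v{c}ek. Your main route, by contrast, proposes to carry out the contour argument directly: bound $g$ on circles avoiding the zeros and use the kernel $\rho/(z(z-\rho))$ to make the remainder vanish. This is a legitimate and more self-contained strategy, but as you rightly flag, the uniform lower bound on $|F_{L}|$ along the circles is the genuinely nontrivial step, and you have only sketched it. The paper sidesteps this by citing existing literature; your version would be longer but would not depend on those external results.
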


\begin{proof}
Let ${}_1F_1$ denotes the Kummer confluent hypergeometric function. It is known that
$$F_{L}(\eta,\rho)=C_{L}(\eta)\rho^{L+1}e^{-\mathrm{i}\rho}{}_1F_1(L+1-\mathrm{i}\eta,2L+2;2\mathrm{i}\rho),$$
where $$C_L(\eta)=\frac{2^Le^{-\frac{\pi\eta}{2}}\left|\Gamma(L+1+\mathrm{i}\eta)\right|}{\Gamma(2L+2)}.$$
By using the next result of Wimp \cite[p. 892]{wimp} for $c=2L+2,$ $\kappa=\eta$ and $z=1/\rho$
$$\frac{{}_1F_1\left(\frac{c}{2}+1-\mathrm{i}\kappa,c+2;\frac{2\mathrm{i}}{z}\right)}{{}_1F_1\left(\frac{c}{2}-\mathrm{i}\kappa,c;\frac{2\mathrm{i}}{z}\right)}=
\frac{c^2(c+1)}{c^2+4\kappa^2}\sum_{k\in\mathbb{Z}\setminus\{0\}}z_k^{-2}\frac{z}{z-z_k^{-1}},$$
where $\kappa,z\in\mathbb{R},$ $c>-1$ and $z_k,$ $k\in\mathbb{Z}\setminus\{0\},$ are the zeros of the function ${}_1F_1(c/2-\mathrm{i}\kappa,c;2\mathrm{i}z),$
it follows that
$$\frac{F_{L+1}(\eta,\rho)}{F_{L}(\eta,\rho)}=\frac{C_{L+1}(\eta)}{C_L(\eta)}\frac{(L+1)^2(2L+3)}{(L+1)^2+\eta^2}\sum_{n\geq 1}\left[\frac{\rho}{x_{L,\eta,n}(x_{L,\eta,n}-\rho)}+\frac{\rho}{y_{L,\eta,n}(y_{L,\eta,n}-\rho)}\right],
$$
which by means of the relation \cite[p. 538]{abra} $L(2L+1)C_L(\eta)=\sqrt{L^2+\eta^2}C_{L-1}(\eta)$ yields \eqref{Comitt}. We note that in the above formula of Wimp \cite[p. 892]{wimp} instead of the correct expression $K=c^2(c+1)/(c^2+4\kappa^2)$ it was used $K=c^2(c+1)/(c^2/4+\kappa^2),$ and instead of the correct argument $2\mathrm{i}/z$ it was $\mathrm{i}/z.$ This can be verified by using the fact that when $\eta=0$ the Coulomb wave function reduces to Bessel function of the first kind, and by using the Mittag-Leffler expansion for Bessel functions of the first kind and the first Rayleigh sum of zeros of Bessel functions we would have contradiction.

Another way to obtain \eqref{Comitt} is to consider the Hadamard infinite product expansion \cite[eq. 76]{stampach}
\begin{equation}\label{infprod}F_{L}(\eta,\rho)=C_L(\eta)\rho^{L+1}e^{\frac{\eta\rho}{L+1}}
\prod_{n\geq1}\left(1-\frac{\rho}{\rho_{L,\eta,n}}\right)e^{\frac{\rho}{\rho_{L,\eta,n}}},\end{equation}
where $\rho_{L,\eta,n}$ is the $n$th zero of the Coulomb wave function. Logarithmic derivation yields
$$\frac{F_{L}'(\eta,\rho)}{F_L(\eta,\rho)}=\frac{L+1}{\rho}+\frac{\eta}{L+1}-\sum_{n\geq 1}\frac{\rho}{\rho_{L,\eta,n}(\rho_{L,\eta,n}-\rho)},$$
which in view of the recurrence relation \cite[p. 539]{abra}
\begin{equation}\label{reccou2}(L+1)F_L'(\eta,\rho)=\left[\frac{(L+1)^2}{\rho}+\eta\right]
F_{L}(\eta,\rho)-\sqrt{(L+1)^2+\eta^2}F_{L+1}(\eta,\rho)\end{equation}
yields
$$\frac{F_{L+1}(\eta,\rho)}{F_{L}(\eta,\rho)}=\frac{L+1}{\sqrt{(L+1)^2+\eta^2}}\sum_{n\geq 1}\frac{\rho}{\rho_{L,\eta,n}(\rho_{L,\eta,n}-\rho)}.$$
Now, taking into account that the zeros $\rho_{L,\eta,n}$ can be separated into positive and negative zeros, the proof of \eqref{Comitt} is done.
\end{proof}

It is worth to mention that if $\eta=0,$ then \eqref{Comitt} reduces to the next well-known Mittag-Leffler expansion
$$\frac{F_{L+1}(0,\rho)}{F_{L}(0,\rho)}=\frac{J_{L+3/2}(\rho)}{J_{L+1/2}(\rho)}=\sum_{n\geq1}\frac{2\rho}{j_{L+1/2,n}^2-\rho^2},$$
where $L>-3/2,$ $J_{L}$ stands for the Bessel function of the first kind of order $L$ and $j_{L,n}$ is the $n$th positive zero of the
Bessel function $J_L.$ Here we used that for each natural $n$ we have $x_{L,0,n}=-y_{L,0,n}=j_{L+1/2,n},$ that is, the corresponding negative and positive zeros of the Bessel function of the first kind are symmetric with respect to the origin.

Now, we are ready to present the first set of results concerning the Tur\'an type inequalities for the regular Coulomb wave function. Three kind of Tur\'anians are considered and the results are mainly based on the Mittag-Leffler expansion \eqref{Comitt}. Our first main result is the following theorem.

\begin{theorem}
The following assertions are true:
\begin{enumerate}

\item[\bf a.] If $L,\eta>0,$ $0<\rho<L(L+1)/\eta,$
$\rho<x_{L,\eta,1}$ or $-3/2<L<-1,$ $\eta>0,$ $0<\rho<L(L+1)/\eta,$
$\rho<x_{L,\eta,1}$ or $\eta\leq0,$ $L\geq0$ and
$0<\rho<x_{L,\eta,1},$ then
\begin{equation}\label{coturan1}
F_{L}^2(\eta,\rho)-F_{L-1}(\eta,\rho)F_{L+1}(\eta,\rho)\geq0.
\end{equation}

\item[\bf b.] If $L,\eta>0,$ $L(L+1)/\eta\leq\rho<x_{L-1,\eta,1}$ or $-3/2<L<-1,$ $\eta>0,$ $L(L+1)/\eta\leq\rho<x_{L-1,\eta,1}$ or $-1<L<0,$ $\eta<0,$ $L(L+1)/\eta\leq\rho<x_{L-1,\eta,1}$ then
$$\frac{\sqrt{L^2+\eta^2}}{L}F_{L}^2(\eta,\rho)-
\frac{\sqrt{(L+1)^2+\eta^2}}{L+1}F_{L-1}(\eta,\rho)F_{L+1}(\eta,\rho)\geq0.$$

\item[\bf c.] If $L>-1,$ $\eta\in\mathbb{R},$ $\rho^2\leq (L^3+1)/(L^2+\eta^2),$ ${\eta}/({L(L+1)})-
{1}/{\rho}>0$ and $0<\rho<x_{L-1,\eta,1},$ then
$$F_{L}^2(\eta,\rho)-
\frac{\sqrt{L^2+\eta^2}\sqrt{(L+1)^2+\eta^2}}{L(L+1)}F_{L-1}(\eta,\rho)
F_{L+1}(\eta,\rho)\geq0.$$
\end{enumerate}
\end{theorem}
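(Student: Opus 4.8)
The plan is to turn each of the three Turánians into a statement about the single Mittag-Leffler sum
\[
\Psi_L(\eta,\rho):=\sum_{n\ge1}\left[\frac{\rho}{x_{L,\eta,n}(x_{L,\eta,n}-\rho)}+\frac{\rho}{y_{L,\eta,n}(y_{L,\eta,n}-\rho)}\right],
\]
so that \eqref{Comitt} reads $F_{L+1}/F_L=\tfrac{L+1}{\sqrt{(L+1)^2+\eta^2}}\Psi_L$, and, by logarithmic differentiation of \eqref{infprod}, the logarithmic derivative is $g_L:=F_L'/F_L=\tfrac{L+1}{\rho}+\tfrac{\eta}{L+1}-\Psi_L$. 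Alongside \eqref{reccou2} I would use the ascending companion recurrence \cite[p.~539]{abra}, which gives $\sqrt{L^2+\eta^2}\,F_{L-1}=\bigl(\tfrac{L^2}{\rho}+\eta\bigr)F_L+L F_L'$, hence $F_{L-1}/F_L=\bigl(Lg_L+\tfrac{L^2}{\rho}+\eta\bigr)/\sqrt{L^2+\eta^2}$. Multiplying the two ratios produces the master identity
\[
\sqrt{L^2+\eta^2}\sqrt{(L+1)^2+\eta^2}\,\frac{F_{L-1}F_{L+1}}{F_L^2}=-L(L+1)g_L^2+\Bigl(\tfrac{L(L+1)}{\rho}-\eta\Bigr)g_L+\Bigl(\tfrac{L^2}{\rho}+\eta\Bigr)\Bigl(\tfrac{(L+1)^2}{\rho}+\eta\Bigr),
\]
and I note that the product of the two square roots on the left is exactly what rationalises the irrational constants in b.\ and c.

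Dividing each Turánian by $F_L^2>0$ and inserting these relations, assertions a., b., c.\ become, respectively, the upper bounds
\[
\frac{\Psi_L}{\Psi_{L-1}}\le\frac{L\sqrt{(L+1)^2+\eta^2}}{(L+1)\sqrt{L^2+\eta^2}},\qquad \frac{\Psi_L}{\Psi_{L-1}}\le1,\qquad \frac{\Psi_L}{\Psi_{L-1}}\le\frac{L^2}{L^2+\eta^2}.
\]
Using the master identity to eliminate $\Psi_{L-1}=\tfrac{L^2+\eta^2}{L(\gamma-L\Psi_L)}$ with $\gamma:=(2L+1)\bigl(\tfrac{L}{\rho}+\tfrac{\eta}{L+1}\bigr)$, parts b.\ and c.\ collapse to the two quadratic inequalities in the single nonnegative quantity $\Psi_L$,
\[
L^2\Psi_L^2-L\gamma\,\Psi_L+(L^2+\eta^2)\ge0\qquad\text{and}\qquad L\Psi_L^2-\gamma\,\Psi_L+L\ge0.
\]
For a.\ I would instead bound the irrational right-hand side below by Cauchy--Schwarz, $\sqrt{L^2+\eta^2}\sqrt{(L+1)^2+\eta^2}\ge L(L+1)+\eta^2$, which reduces a.\ to exactly the polynomial inequality that proves c.\ but with the constant term relaxed by $+\eta^2$; specialising to $\eta=0$ recovers the classical Bessel Turánian of Sz\'asz \cite{szasz1}.

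The positivity that makes this work comes from \eqref{Comitt}: once $\rho$ is smaller than the relevant first positive zero, every summand of $\Psi_L$ is positive, so $\Psi_L,\Psi_{L-1}>0$ and the factor $\gamma-L\Psi_L=Lg_L+\tfrac{L^2}{\rho}+\eta$ equals $\sqrt{L^2+\eta^2}\,F_{L-1}/F_L>0$; this is why b.\ and c.\ carry the restriction $\rho<x_{L-1,\eta,1}$ and a.\ carries $\rho<x_{L,\eta,1}$. The threshold $\rho=L(L+1)/\eta$ is precisely where the coefficient $\tfrac{L(L+1)}{\rho}-\eta$ of the cross term changes sign, which explains why the hypotheses split according to the sign of $\tfrac{\eta}{L(L+1)}-\tfrac1\rho$: a.\ lives in the regime where this cross term is positive and b., c.\ in the complementary one. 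The remaining, and decisive, ingredient is an estimate forcing $\Psi_L$ to lie below the smaller root of the relevant quadratic, and this is exactly the role of the result of Ross \cite{ross} together with the monotonicity and interlacing of the Coulomb zeros: it controls the Coulomb zeta sums $\sum_n\bigl(x_{L,\eta,n}^{-m}+y_{L,\eta,n}^{-m}\bigr)$ that are the Taylor coefficients of $\Psi_L$, and the arithmetic condition $\rho^2\le(L^3+1)/(L^2+\eta^2)$ in c.\ is what converts this control into the required root placement.

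The main obstacle is that $g_L$ decreases monotonically from $+\infty$ to $-\infty$ as $\rho$ runs over $(0,x_{L,\eta,1})$ (equivalently $\Psi_L$ increases from $0$ to $+\infty$), so neither quadratic can be made nonnegative for all values of its argument: one cannot simply discard the linear term but must show that the actual value of $\Psi_L$ stays on the correct side of the smaller root, whose product with the larger root is $\tfrac{L^2+\eta^2}{L^2}$ in b.\ and $1$ in c. Carrying out this placement uniformly on each $\rho$-interval, keeping track of the reversal of the inequalities when $-1<L<0$ (where $L(L+1)<0$), and coping with the genuine asymmetry between the positive zeros $x_{L,\eta,n}$ and the negative zeros $y_{L,\eta,n}$ when $\eta\neq0$, is what forces the several case distinctions in the statement.
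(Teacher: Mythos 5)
Your algebraic setup is correct as far as it goes: the master identity obtained by multiplying the two recurrence-derived ratios is right, and parts a., b., c.\ are indeed equivalent (once all the functions involved are positive) to your stated bounds on $\Psi_L/\Psi_{L-1}$, hence to quadratic inequalities in $\Psi_L$. But the proposal stops exactly where the difficulty begins. As you yourself observe, $\Psi_L$ sweeps out all of $(0,\infty)$ as $\rho$ runs over $(0,x_{L,\eta,1})$, so the quadratics cannot hold identically in $\Psi_L$; one must show that under the stated hypotheses the actual value of $\Psi_L$ avoids the interval between the roots. This ``root placement'' is the entire content of the theorem, and you offer no argument for it --- only the assertion that the result of Ross, together with monotonicity and interlacing of the Coulomb zeros, ``controls'' the Coulomb zeta sums and that the condition $\rho^2\le(L^3+1)/(L^2+\eta^2)$ ``converts this control into the required root placement.'' That is not a proof, and the attribution is misplaced: in the paper, Ross's lemma (Lemma \ref{leross}) is used only in the proof of Theorem \ref{thturan}, i.e.\ of the identity \eqref{identcou} and the sharp inequality \eqref{coturan4}, not in the proof of this theorem. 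A further structural problem: you reduce part a.\ (via Cauchy--Schwarz) to ``part c.'s quadratic with the constant relaxed by $\eta^2$,'' but part a.'s hypotheses include the regime $0<\rho<L(L+1)/\eta$, where $\eta/(L(L+1))-1/\rho<0$ --- the regime \emph{complementary} to c.'s --- so even a complete proof of c.\ under c.'s hypotheses would yield nothing for a.

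The paper's proof avoids the root-placement problem altogether, by three different mechanisms, none of which appears in your proposal. For part a.\ it uses the Riccati form \eqref{eqdiffercou} of the Coulomb equation \eqref{eqcou} to trade the quadratic term $[F_L'(\eta,\rho)/F_L(\eta,\rho)]^2$ for the derivative $-[F_L'(\eta,\rho)/F_L(\eta,\rho)]'$, which by \eqref{logder} equals $(L+1)/\rho^2$ plus a sum of squares and is therefore positive for \emph{all} $\rho$; after this substitution the Tur\'anian becomes a sum of three pieces whose signs are read off directly, the threshold $\rho=L(L+1)/\eta$ entering linearly through the coefficient $b_{L,\eta}(\rho)$ rather than through a discriminant. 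For part b.\ it uses the identity of Miyazaki et al.\ expressing $\rho^2\bigl[F_{L+1}'(\eta,\rho)F_L(\eta,\rho)-F_L'(\eta,\rho)F_{L+1}(\eta,\rho)\bigr]$ (up to a positive constant) as the series $\sum_{n\geq1}(2L+2n+1)F_{L+n}^2(\eta,\rho)\ge0$. For part c.\ it uses the log-concavity $F_L''(\eta,\rho)F_L(\eta,\rho)-[F_L'(\eta,\rho)]^2\le0$, again an immediate consequence of \eqref{logder}, combined with the recurrences and the differential equation; there the hypothesis $\rho^2\le(L^3+1)/(L^2+\eta^2)$ serves only to guarantee $f_{L,\eta}(\rho)\ge-1$ in an explicit identity, not to locate roots of a quadratic. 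These three ingredients --- the ODE substitution, Miyazaki's sum-of-squares formula, and the log-concavity inequality --- are the decisive ideas, and without them (or a genuine substitute estimate on $\Psi_L$) your reduction does not constitute a proof.
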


\begin{proof}
{\bf a.} By using the recurrence relation
\cite[p. 539]{abra}
\begin{equation}\label{reccou1}LF_L'(\eta,\rho)=\sqrt{L^2+\eta^2}F_{L-1}(\eta,\rho)-
\left(\frac{L^2}{\rho}+\eta\right)F_{L}(\eta,\rho)\end{equation}
and \eqref{reccou2}, we obtain
$$\frac{{}_1\Delta_{L,\eta}(\rho)}{F_{L}^2(\eta,\rho)}=a_{L,\eta}(\rho)-
b_{L,\eta}(\rho)\frac{F_{L}'(\eta,\rho)}{F_{L}(\eta,\rho)}+
c_{L,\eta}\left[\frac{F_{L}'(\eta,\rho)}{F_{L}(\eta,\rho)}\right]^2,$$
where
$$a_{L,\eta}(\rho)=1-\frac{\left(\frac{L^2}{\rho}+\eta\right)\left[\frac{(L+1)^2}{\rho}+\eta\right]}
{\sqrt{L^2+\eta^2}\sqrt{(L+1)^2+\eta^2}},\ b_{L,\eta}(\rho)=\frac{\frac{L(L+1)}{\rho}-\eta}{\sqrt{L^2+\eta^2}\sqrt{(L+1)^2+\eta^2}},$$
$$c_{L,\eta}=\frac{L(L+1)}{\sqrt{L^2+\eta^2}\sqrt{(L+1)^2+\eta^2}}$$
and ${}_1\Delta_{L,\eta}(\rho)$ stands for the Tur\'an expression,
defined by
$${}_1\Delta_{L,\eta}(\rho)=F_{L}^2(\eta,\rho)-F_{L-1}(\eta,\rho)F_{L+1}(\eta,\rho).$$
Now, taking into account that the Coulomb wave function is a
particular solution of the Coulomb differential equation \cite[p.
538]{abra}
\begin{equation}\label{eqcou}w''(\rho)+\left[1-\frac{2\eta}{\rho}-
\frac{L(L+1)}{\rho^2}\right]w(\rho)=0,\end{equation} we get
\begin{equation}\label{eqdiffercou}\left[\frac{F_{L}'(\eta,\rho)}{F_{L}(\eta,\rho)}\right]^2=
\frac{L(L+1)}{\rho^2}+\frac{2\eta}{\rho}-1-
\left[\frac{F_{L}'(\eta,\rho)}{F_{L}(\eta,\rho)}\right]',\end{equation}
which in turn implies that
$$\frac{{}_1\Delta_{L,\eta}(\rho)}{F_{L}^2(\eta,\rho)}=d_{L,\eta}(\rho)-b_{L,\eta}(\rho)
\frac{F_{L}'(\eta,\rho)}{F_{L}(\eta,\rho)}-
c_{L,\eta}\left[\frac{F_{L}'(\eta,\rho)}{F_{L}(\eta,\rho)}\right]',$$
where
$$d_{L,\eta}(\rho)=1-\frac{L(L+1)+\frac{\eta}{\rho}+\eta^2}
{\sqrt{L^2+\eta^2}\sqrt{(L+1)^2+\eta^2}}.$$
Moreover, by using the recurrence relation \eqref{reccou2} and the
Mittag-Leffler expansion \eqref{Comitt}, it follows
$$\frac{F_{L}'(\eta,\rho)}{F_{L}(\eta,\rho)}=\frac{L+1}{\rho}+\frac{\eta}{L+1}-\sum_{n\geq1}
\left[\frac{\rho}{x_{L,\eta,n}(x_{L,\eta,n}-\rho)}+
\frac{\rho}{y_{L,\eta,n}(y_{L,\eta,n}-\rho)}\right]$$ and
\begin{equation}\label{logder}\left[\frac{F_{L}'(\eta,\rho)}{F_{L}(\eta,\rho)}\right]'
=-\frac{L+1}{\rho^2}-\sum_{n\geq1}
\left[\frac{1}{(x_{L,\eta,n}-\rho)^2}+\frac{1}{(y_{L,\eta,n}-\rho)^2}\right].\end{equation}
Consequently we have
\begin{align*}\frac{{}_1\Delta_{L,\eta}(\rho)}{F_{L}^2(\eta,\rho)}=e_{L,\eta}+b_{L,\eta}&(\rho)\sum_{n\geq1}
\left[\frac{\rho}{x_{L,\eta,n}(x_{L,\eta,n}-\rho)}+
\frac{\rho}{y_{L,\eta,n}(y_{L,\eta,n}-\rho)}\right]
\\&+c_{L,\eta}\sum_{n\geq1}
\left[\frac{1}{(x_{L,\eta,n}-\rho)^2}+\frac{1}{(y_{L,\eta,n}-\rho)^2}\right],\end{align*}
where
$$e_{L,\eta}=1-\frac{L\sqrt{(L+1)^2+\eta^2}}{(L+1)\sqrt{L^2+\eta^2}}.$$
Note that for all $L\geq0$ or $-3/2<L<-1$ and $\eta\in\mathbb{R}$ we have
$c_{L,\eta}\geq0$ and $e_{L,\eta}\geq0.$ Thus
${}_1\Delta_{L,\eta}(\rho)$ is positive if $L,\eta>0,$
$0<\rho<L(L+1)/\eta,$ $\rho<x_{L,\eta,1}$ or if $-3/2<L<-1,$ $\eta>0,$
$0<\rho<L(L+1)/\eta,$ $\rho<x_{L,\eta,1}$ or if $\eta\leq0,$
$L\geq0$ and $0<\rho<x_{L,\eta,1}.$

{\bf b.} By using the recurrence relations \eqref{reccou1} and
\eqref{reccou2} we obtain
$$F_{L+1}'(\eta,\rho)F_{L}(\eta,\rho)-F_{L}'(\eta,\rho)F_{L+1}(\eta,\rho)=
{}_2\Delta_{L+1,\eta}(\rho)-\left[\frac{\eta}{(L+1)(L+2)}-
\frac{1}{\rho}\right]F_{L}(\eta,\rho)F_{L+1}(\eta,\rho),$$ where
$${}_2\Delta_{L,\eta}(\rho)=\frac{\sqrt{L^2+\eta^2}}{L}F_{L}^2(\eta,\rho)-
\frac{\sqrt{(L+1)^2+\eta^2}}{L+1}F_{L-1}(\eta,\rho)F_{L+1}(\eta,\rho).$$
On the other hand, according to \cite[Lemma 2.4]{miyazaki} we have
$$\rho^2\frac{\sqrt{(L+1)^2+\eta^2}}{L+1}\left[F_{L+1}'(\eta,\rho)F_{L}(\eta,\rho)-
F_{L}'(\eta,\rho)F_{L+1}(\eta,\rho)\right]=\sum_{n\geq1}(2L+2n+1)F_{L+n}^2(\eta,\rho).
$$
From this we obtain that
$$\frac{{}_2\Delta_{L,\eta}(\rho)}{F_{L-1}^2(\eta,\rho)}\geq\left[\frac{\eta}{L(L+1)}-
\frac{1}{\rho}\right]\frac{F_{L}(\eta,\rho)}{F_{L-1}(\eta,\rho)}$$
and by using the Mittag-Leffler expansion \eqref{Comitt}, the right-hand
side of the above inequality is positive if $L,\eta>0$ and
$L(L+1)/\eta\leq\rho<x_{L-1,\eta,1}$ or if $-3/2<L<-1,$ $\eta>0$ and
$L(L+1)/\eta\leq\rho<x_{L-1,\eta,1}$ or if $-1<L<0,$ $\eta<0,$ $L(L+1)/\eta\leq\rho<x_{L-1,\eta,1}.$

{\bf c.} Observe that \eqref{logder} implies that for all
$\eta,\rho\in\mathbb{R},$ $\rho\neq0$ and $L\geq-1$ we have
$$D_{L,\eta}(\rho)=F_{L}''(\eta,\rho)F_{L}(\eta,\rho)-F_{L}'(\eta,\rho)F_{L}'(\eta,\rho)\leq0.$$
Now, by using the recurrence relations \eqref{reccou1} and
\eqref{reccou2} and also the fact that $F_L(\eta,\rho)$ satisfies
the Coulomb differential equation \eqref{eqcou}, we obtain
$$D_{L,\eta}(\rho)=f_{L,\eta}(\rho)F_{L}^2(\eta,\rho)+
\frac{1}{c_{L,\eta}}F_{L-1}(\eta,\rho)F_{L+1}(\eta,\rho)+
\left[\frac{\eta}{L(L+1)}-\frac{1}{\rho}\right]F_{L-1}(\eta,\rho)F_{L}(\eta,\rho),$$
where
$$f_{L,\eta}(\rho)=\frac{L}{\rho^2}-1-\frac{\eta^2}{L^2}.$$
If $L>-1,$ $\eta\in\mathbb{R}$ and $(L^3+1)/(L^2+\eta^2)\geq\rho^2,$ then we have that
$f_{L,\eta}(\rho)\geq-1$ and consequently we have
$$0\geq D_{L,\eta}(\rho)\geq -{}_3\Delta_{L,\eta}(\rho)+
\left[\frac{\eta}{L(L+1)}-\frac{1}{\rho}\right]F_{L-1}(\eta,\rho)F_{L}(\eta,\rho),$$
where
$${}_3\Delta_{L,\eta}(\rho)=F_{L}^2(\eta,\rho)-
\frac{\sqrt{L^2+\eta^2}\sqrt{(L+1)^2+\eta^2}}{L(L+1)}F_{L-1}(\eta,\rho)F_{L+1}(\eta,\rho).$$
But the above inequality is equivalent to
$$\frac{{}_3\Delta_{L,\eta}(\rho)}{F_{L-1}^2(\eta,\rho)}\geq\left[\frac{\eta}{L(L+1)}-
\frac{1}{\rho}\right]\frac{F_{L}(\eta,\rho)}{F_{L-1}(\eta,\rho)}$$
and by using again the Mittag-Leffler expansion \eqref{Comitt}, the
right-hand side of the above inequality is positive if ${\eta}/({L(L+1)})-
{1}/{\rho}>0$ and $0<\rho<x_{L-1,\eta,1}.$ With this the proof is
complete.
\end{proof}

Now, let us consider the notations
$$B_{L,\eta}(\rho)=\frac{L\sqrt{(L+1)^2+\eta^2}}{(2L+1)\left[\frac{L(L+1)}{\rho}+\eta\right]}\ \ \ \mbox{and}\ \ \ C_{L,\eta}(\rho)=\frac{(L+1)\sqrt{L^2+\eta^2}}{(2L+1)\left[\frac{L(L+1)}{\rho}+\eta\right]}.$$
In what follows we show that if $L\geq0,$ $\eta\leq0,$ then the
restriction $\rho<x_{L,\eta,1}$ in the Tur\'an type inequality
\eqref{coturan1} can be removed. Moreover, we show that in this case the
inequality \eqref{coturan1} can be improved.

\begin{theorem}\label{thturan}
If $n\in\{0,1,\dots\}$ and $L\geq-3/2,$ $L\neq-1,$ $\rho>0,$ $\eta\in\mathbb{R},$ $\eta\neq0$ or $L>-3/2,$ $\rho>0$ and $\eta=0,$ then
\begin{align}\label{identcou}
&F_{L+n}^2(\eta,\rho)-F_{L+n-1}(\eta,\rho)F_{L+n+1}(\eta,\rho)=-\frac{\Theta
C_{L+n,\eta}(\rho)}{C_{L+n,\eta}(\rho)}F_{L+n}^2(\eta,\rho)\\&-\sum_{i=1}^{\infty}\frac{B_{L+n+1,\eta}(\rho)B_{L+n+2,\eta}(\rho)\dots
B_{L+n+i+1,\eta}(\rho)}{C_{L+n,\eta}(\rho)C_{L+n+1,\eta}(\rho)\dots
C_{L+n+i,\eta}(\rho)}\Theta(B_{L+n+i-1,\eta}(\rho)C_{L+n+i,\eta}(\rho))F_{L+n+i}^2(\eta,\rho)\nonumber,
\end{align}
where $\Theta$ is the forward difference operator defined by $\Theta
A_n=A_{n+1}-A_n.$

In particular, for all $L\geq 0,$ $\eta\leq0$ and $\rho>0$ the
following sharp Tur\'an type inequality is valid
\begin{equation}\label{coturan4}
F_{L}^2(\eta,\rho)-F_{L-1}(\eta,\rho)F_{L+1}(\eta,\rho)\geq
\left[1-\frac{L(2L+1)\sqrt{(L+1)^2+\eta^2}}{(L+1)(2L+3)\sqrt{L^2+\eta^2}}\right]F_{L}^2(\eta,\rho).\end{equation}
\end{theorem}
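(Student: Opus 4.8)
The starting point is to turn the two contiguous recurrences \eqref{reccou1} and \eqref{reccou2} into a pure three-term recurrence in the index $L$ by eliminating the derivative $F_L'(\eta,\rho)$. Multiplying \eqref{reccou1} by $L+1$ and \eqref{reccou2} by $L$ produces two expressions for $L(L+1)F_L'(\eta,\rho)$; equating them cancels the derivative, and after collecting the coefficient of $F_L(\eta,\rho)$ as $(2L+1)\left[L(L+1)/\rho+\eta\right]$ one arrives at the clean relation
$$F_L(\eta,\rho)=C_{L,\eta}(\rho)F_{L-1}(\eta,\rho)+B_{L,\eta}(\rho)F_{L+1}(\eta,\rho),$$
which is exactly the recurrence encoded by the quantities $B_{L,\eta}(\rho)$ and $C_{L,\eta}(\rho)$ introduced just before the theorem. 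This is the only analytic input; everything else is an algebraic consequence of this recurrence together with the decay of $F_{L+n+i}(\eta,\rho)$ as $i\to\infty$.

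To obtain \eqref{identcou} I would follow the method of Ross, setting up a one-step reduction of the Tur\'an determinant and then iterating it. Writing $\Delta_M=F_M^2-F_{M-1}F_{M+1}$ (suppressing $(\eta,\rho)$ and abbreviating $B_M,C_M$), I would use the recurrence at index $M$ to replace $F_{M-1}$ and the recurrence at index $M+1$ to replace the cross term $F_MF_{M+1}$, which yields the one-step identity
$$\Delta_M=-\frac{\Theta C_M}{C_M}F_M^2-\frac{\Theta B_M}{C_M}F_{M+1}^2+\frac{B_{M+1}}{C_M}\Delta_{M+1}.$$
Iterating this from $M=L+n$ upward, the remainder $\Delta_{M+i}$ is carried by the accumulating factor $B_{M+1}\cdots B_{M+i}/(C_M\cdots C_{M+i-1})$; since $F_{L+n+i}(\eta,\rho)\to0$ faster than this factor grows (the growth-order estimate), the remainder vanishes and the series converges. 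Collecting the two contributions to each $F_{L+n+i}^2$ and using the elementary simplification $C_M\,\Theta B_{M-1}+B_M\,\Theta C_M=\Theta(B_{M-1}C_M)$ converts the coefficient into the forward difference $\Theta(B_{L+n+i-1,\eta}(\rho)C_{L+n+i,\eta}(\rho))$ appearing in \eqref{identcou}.

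For the sharp inequality \eqref{coturan4} I would specialize \eqref{identcou} to $n=0$ and bound the right-hand side below by $[\,\cdots\,]F_L^2$. Two things must be checked for $L\ge0$, $\eta\le0$, $\rho>0$: that the leading coefficient already dominates the bracket, and that the remaining series is nonnegative. The first reduces to $C_{L+1,\eta}(\rho)/C_{L,\eta}(\rho)\le L(2L+1)\sqrt{(L+1)^2+\eta^2}/[(L+1)(2L+3)\sqrt{L^2+\eta^2}]$; a short computation shows the difference between the two sides is proportional to $-2\eta/\left[(L+1)(L+2)/\rho+\eta\right]$, so the comparison is governed by the sign of $s_{L+1}:=(L+1)(L+2)/\rho+\eta$. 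The nonnegativity of the series amounts to controlling the signs of the forward differences $\Theta(B_{L+i-1,\eta}(\rho)C_{L+i,\eta}(\rho))$ against the sign of the coefficient products, i.e. to monotonicity statements for $C_M$ and for $B_{M-1}C_M$ in the index $M$.

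I expect this last sign analysis to be the main obstacle. For $\eta\le0$ the quantity $s_M:=M(M+1)/\rho+\eta$ is increasing in $M$ but need not have constant sign: for large $\rho$ it is negative for small $M$ and positive for large $M$, which flips the signs of $B_M$ and $C_M$, and with it the signs of the individual coefficients, so a naive term-by-term ``drop the tail'' argument does not succeed and nonnegativity cannot be read off from a single factor. The crux is therefore to organize the comparison so that it is insensitive to these sign changes, either by exploiting that $C_M$ stays monotone (even where it is negative) so that the ratios telescope, or by estimating the series globally rather than term by term. The reduction to $2\eta\le0$ in the favorable regime $s_{L+1}>0$ already explains why $\eta\le0$ is the correct hypothesis; making the argument uniform in all $\rho>0$ is the part that requires genuine care.
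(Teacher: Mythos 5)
Your handling of the identity \eqref{identcou} is sound in outline, and it amounts to re-proving Lemma \ref{leross} (Ross's theorem) rather than quoting it, which is what the paper does. Eliminating $F_L'$ from \eqref{reccou1}--\eqref{reccou2} gives exactly the recurrence $F_L=B_{L,\eta}(\rho)F_{L+1}+C_{L,\eta}(\rho)F_{L-1}$ that the paper cites, your one-step identity
\begin{equation*}
\Delta_M=-\frac{\Theta C_M}{C_M}F_M^2-\frac{\Theta B_M}{C_M}F_{M+1}^2+\frac{B_{M+1}}{C_M}\Delta_{M+1},
\qquad \Delta_M=F_M^2-F_{M-1}F_{M+1},
\end{equation*}
is correct, and so is the telescoping $C_M\Theta B_{M-1}+B_M\Theta C_M=\Theta(B_{M-1}C_M)$. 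Two caveats, though. First, the vanishing of the remainder requires the decay of $F_{L+n+i}(\eta,\rho)$ uniformly in the index; this is precisely the hypothesis of Lemma \ref{leross} that the paper works hardest to verify, using $F_L(\eta,\rho)\sim C_L(\eta)\rho^{L+1}$ and the explicit formula for $C_L(\eta)$, whereas you only gesture at a ``growth-order estimate.'' Second, if you actually carry out your iteration, the coefficient of $F_{L+n+i}^2$ comes out as $-\frac{B_{L+n+1}\cdots B_{L+n+i-1}}{C_{L+n}\cdots C_{L+n+i}}\Theta(B_{L+n+i-1,\eta}(\rho)C_{L+n+i,\eta}(\rho))$, with $i-1$ factors of $B$ in the numerator, not the $i+1$ factors $B_{L+n+1}\cdots B_{L+n+i+1}$ displayed in \eqref{identcou} and \eqref{eqross}; the two expressions differ, and specializing to $\eta=0$, where the identity must reduce to Sz\'asz's Bessel identity, confirms that your indexing is the consistent one. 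So your method is fine, but your assertion that the iteration lands exactly on \eqref{identcou} is not verified as written; the discrepancy must be flagged rather than asserted away.

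The genuine gap is that you do not prove the Tur\'an inequality \eqref{coturan4}, which is the substance of the theorem. You set up the correct reduction (take $n=0$, compare the head coefficient $1-C_{L+1,\eta}(\rho)/C_{L,\eta}(\rho)$ with the bracketed constant, show the tail is nonnegative), and your computation that the head comparison is governed by the sign of $(L+1)(L+2)/\rho+\eta$ is right, but you then declare the sign analysis ``the main obstacle'' and leave it, together with the nonnegativity of the tail and the uniformity in $\rho>0$, unresolved; sharpness is never addressed at all. The paper's proof consists precisely of the three steps you skip: (i) the inequality \eqref{ineqross}, $\Theta(B_{L+i-1,\eta}(\rho)C_{L+i,\eta}(\rho))\le0$ for $L\ge0,$ $\eta\le0,$ $\rho>0,$ proved by clearing denominators and checking the signs of four explicit polynomials in $\omega=L+i$; (ii) the claim that $\rho\mapsto 1-C_{L+1,\eta}(\rho)/C_{L,\eta}(\rho)$ is increasing, hence bounded below by its limit as $\rho\to0$, which is the bracketed constant; (iii) sharpness, via $\lim_{\rho\to0}{}_1\Delta_{L,\eta}(\rho)/F_L^2(\eta,\rho)$ computed from $L(2L+1)C_L(\eta)=\sqrt{L^2+\eta^2}\,C_{L-1}(\eta)$. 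To your credit, the obstruction you identify is real: the cross-multiplications behind \eqref{ineqross} and the monotonicity claim in (ii) tacitly assume positivity of quantities of the form $M(M+1)+\rho\eta$, which fails when $\eta<0$ and $\rho>M(M+1)/|\eta|$, so the published argument is itself incomplete in exactly the regime you point to. But diagnosing the difficulty is not the same as overcoming it: as it stands, your proposal establishes (a corrected form of) \eqref{identcou} and leaves \eqref{coturan4} unproven.
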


It is important to mention here that when $\eta=0$ the Tur\'an
type inequalities \eqref{coturan1} and \eqref{coturan4} reduce to
known results of Sz\'asz \cite{szasz1}. More precisely, since
\cite[p. 542]{abra}
$F_L(0,\rho)=\sqrt{\frac{\pi\rho}{2}}J_{L+1/2}(\rho),$ the Tur\'an type inequalities \eqref{coturan1} and
\eqref{coturan4} for $\eta=0$ and $L+1/2=\nu$ become
$$J_{\nu}^2(\rho)-J_{\nu-1}(\rho)J_{\nu+1}(\rho)\geq 0,$$
$$J_{\nu}^2(\rho)-J_{\nu-1}(\rho)J_{\nu+1}(\rho)\geq \frac{1}{\nu+1}J_{\nu}^2(\rho),$$
where $\nu\geq1/2$ and $\rho>0.$ For more details on Tur\'an type
inequalities for Bessel functions and other generalizations we refer
to the papers \cite{BP,bustoz,JB,karlin,rao,patrick,skov,szasz2,thiru}
and the references therein.

The proof of the above theorem is based on the next result of Ross
\cite[Theorem 3]{ross}.

\begin{lemma}\label{leross}
Let $I$ be an interval and let $\{y_n\}_{n\geq0}$ be a sequence of
functions of real variable $x$, which is uniformly bounded in $n$
for each $x\in I.$ If these functions satisfy
$$y_n(x)=B_ny_{n+1}(x)+C_ny_{n-1}(x),$$
where $B_n$ and $C_n$ are functions of $x,$ $x\in I,$ with the
property that $C_n(x)\neq0,$ $B_n(x)\to0$ and
$\prod_{i=1}^n|B_i(x)/C_i(x)|$ converges as $n\to\infty$ for all
$x\in I,$ then
\begin{equation}\label{eqross}
y_n^2(x)-y_{n-1}(x)y_{n+1}(x)=-\frac{\Theta
C_n}{C_n}y_n^2(x)-\sum_{i=1}^{\infty}\frac{B_{n+1}B_{n+2}\dots
B_{n+i+1}}{C_nC_{n+1}\dots
C_{n+i}}\Theta(B_{n+i-1}C_{n+i})y_{n+i}^2(x).
\end{equation}
\end{lemma}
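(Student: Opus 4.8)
The plan is to reduce the identity to a telescoping recursion for the Tur\'an expression and then pass to the limit. Fix $x\in I$ and abbreviate $T_n=y_n^2-y_{n-1}y_{n+1}$, suppressing the fixed argument $x$; this is the left-hand side of \eqref{eqross}. First I would derive a one-step descent identity linking $T_n$ to $T_{n+1}$. Solving the recurrence $y_n=B_ny_{n+1}+C_ny_{n-1}$ for $y_{n-1}$ gives $y_{n-1}=(y_n-B_ny_{n+1})/C_n$, so that
$$T_n=y_n^2-\frac{1}{C_n}y_ny_{n+1}+\frac{B_n}{C_n}y_{n+1}^2.$$
Applying the recurrence one index higher, $y_{n+1}=B_{n+1}y_{n+2}+C_{n+1}y_n$, to rewrite $T_{n+1}$ and thereby eliminate the cross term $y_ny_{n+1}$, I obtain after simplification the key descent relation
$$T_n=-\frac{\Theta C_n}{C_n}\,y_n^2-\frac{\Theta B_n}{C_n}\,y_{n+1}^2+\frac{B_{n+1}}{C_n}\,T_{n+1},$$
which is legitimate because $C_n\neq0$. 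This relation is the engine of the whole argument.

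Next I would iterate it. Writing $\Lambda_0=1$ and $\Lambda_i=\prod_{k=1}^iB_{n+k}/C_{n+k-1}$, an induction on $N$ yields
$$T_n=\sum_{i=0}^{N-1}\Lambda_i\left(-\frac{\Theta C_{n+i}}{C_{n+i}}y_{n+i}^2-\frac{\Theta B_{n+i}}{C_{n+i}}y_{n+i+1}^2\right)+\Lambda_N\,T_{n+N},$$
the telescoping of the $T$-terms being forced by $\Lambda_i\,B_{n+i+1}/C_{n+i}=\Lambda_{i+1}$. To read off the coefficient of a fixed square $y_{n+i}^2$ with $i\ge1$ I would collect its two contributions, one from the first term at level $i$ and one from the second term at level $i-1$, and combine them by means of the product-difference identity $C_{n+i}\,\Theta B_{n+i-1}+B_{n+i}\,\Theta C_{n+i}=\Theta(B_{n+i-1}C_{n+i})$; the $i=0$ term contributes only $-(\Theta C_n/C_n)y_n^2$. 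This bookkeeping is precisely what produces the forward-difference factor $\Theta(B_{n+i-1}C_{n+i})$ and the telescoping product of ratios $B/C$ displayed in \eqref{eqross}, and I expect it to require careful tracking of the index shifts in the numerators and denominators.

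The main obstacle is the passage $N\to\infty$, i.e.\ showing that the remainder $\Lambda_N T_{n+N}$ tends to $0$; this is exactly where all three standing hypotheses must be combined. Uniform boundedness of $\{y_n\}$ in $n$ gives $|T_{n+N}|\le 2\sup_m y_m^2(x)<\infty$, so it suffices to prove $\Lambda_N\to0$. Writing $|\Lambda_N|=\left|\tfrac{C_{n+N}}{C_n}\right|\prod_{k=1}^N\left|\tfrac{B_{n+k}}{C_{n+k}}\right|$ reduces the question to the behaviour of two factors: convergence of $\prod_i|B_i(x)/C_i(x)|$ ensures the shifted partial products $\prod_{k=1}^N|B_{n+k}/C_{n+k}|$ converge, while the same convergence forces $B_i/C_i\to1$, so that the assumption $B_i\to0$ yields $C_i\to0$ and hence $\Lambda_N\to0$. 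Once the remainder is shown to vanish, letting $N\to\infty$ in the iterated identity and inserting the coefficient computation of the previous paragraph delivers \eqref{eqross}. I expect this limiting step, rather than the (lengthy but routine) algebra, to be the delicate part of the proof.
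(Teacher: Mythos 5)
You are in an unusual position here: the paper never proves this lemma at all (it is quoted from Ross's paper, with two claimed typo corrections), so your telescoping derivation is the only proof on the table, and its engine is sound. Your descent relation
$$T_n=-\frac{\Theta C_n}{C_n}\,y_n^2-\frac{\Theta B_n}{C_n}\,y_{n+1}^2+\frac{B_{n+1}}{C_n}\,T_{n+1}$$
is an exact consequence of the recurrence, and the iterated form with $\Lambda_i=\prod_{k=1}^{i}B_{n+k}/C_{n+k-1}$ telescopes exactly as you claim. The gap is in the one step you declined to carry out, namely the assertion that the coefficient bookkeeping ``is precisely what produces \dots \eqref{eqross}''. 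It is not. Collecting the coefficient of $y_{n+i}^2$, $i\geq1$, and using your product-difference identity gives
$$-\Lambda_{i}\frac{\Theta C_{n+i}}{C_{n+i}}-\Lambda_{i-1}\frac{\Theta B_{n+i-1}}{C_{n+i-1}}
=-\frac{\Lambda_{i-1}}{C_{n+i-1}C_{n+i}}\,\Theta(B_{n+i-1}C_{n+i})
=-\frac{B_{n+1}B_{n+2}\cdots B_{n+i-1}}{C_nC_{n+1}\cdots C_{n+i}}\,\Theta(B_{n+i-1}C_{n+i}),$$
a numerator with $i-1$ factors, empty when $i=1$; whereas \eqref{eqross} has the numerator $B_{n+1}\cdots B_{n+i+1}$ with $i+1$ factors. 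The two expressions differ by the factor $B_{n+i}B_{n+i+1}$ in every summand, so your argument, carried to completion, does not yield \eqref{eqross}.

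The mismatch is not a flaw in your algebra: it is \eqref{eqross} as printed that is false, and your derivation produces the correct identity. Test both against $y_n=J_n(x)$, $B_n=C_n=x/(2n)$ (all hypotheses hold), with $n=1$, $x=1$: the left-hand side is $J_1^2(1)-J_0(1)J_2(1)\approx 0.105721$; your coefficients give $\tfrac12J_1^2+\sum_{i\geq1}\tfrac{2}{i(i+2)}J_{1+i}^2\approx 0.105721$, while the right-hand side of \eqref{eqross} evaluates to $\tfrac12J_1^2+\tfrac1{36}J_2^2+\cdots\approx 0.0972$. The culprit is precisely the paper's ``correction'' of Ross's theorem: the paper reports that Ross printed $B_{n+i-1}$ as the last numerator factor and insists it should be $B_{n+i+1}$, but your computation shows Ross's indexing is the right one. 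So, once the bookkeeping is done explicitly, your proposal proves a corrected version of the lemma and simultaneously shows the statement as printed cannot be proved. (This does not damage the paper downstream: in the proof of \eqref{coturan4} only the sign of each summand matters, and that analysis rests on the sign of $\Theta(B_{L+i-1,\eta}C_{L+i,\eta})$.) One smaller point: your deduction $\Lambda_N\to0$ requires reading the hypothesis as convergence of $\prod_i|B_i/C_i|$ to a \emph{nonzero} limit, since otherwise $|B_i/C_i|\to1$ need not hold; that is the standard convention for infinite products and it is satisfied in the paper's application, but you should state it.
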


For reader's convenience we note here that in formula (i) of
\cite[p. 28]{ross} the expression $B_ny_n$ should be written as
$B_{n+1}y_n,$ and in the main formula of \cite[Theorem 3]{ross} the
expression $B_{n+i-1}$ should be written as $B_{n+i+1},$ just like
in \eqref{eqross}.

\begin{proof}[Proof of Theorem \ref{thturan}] In order to deduce the infinite sum representation of
the Tur\'anian of the Coulomb wave functions in Theorem \ref{thturan} we shall use Lemma
\ref{leross}. According to the recurrence relation \cite[p. 539]{abra}
$$B_{L,\eta}(\rho)F_{L+1}(\eta,\rho)=F_{L}(\eta,\rho)-C_{L,\eta}(\rho)F_{L-1}(\eta,\rho)$$
we have
$$F_{L+n}(\eta,\rho)=B_{L+n,\eta}(\rho)F_{L+n+1}(\eta,\rho)+C_{L+n,\eta}(\rho)F_{L+n-1}(\eta,\rho).$$
Observe that when $n\in\{0,1,\dots\}$ for $L>-3/2,$ $L\neq-1,$ $\rho\in\mathbb{R}$ and $\eta\in\mathbb{R},$ $\eta\neq0,$ or $L>-3/2,$ $\rho\in\mathbb{R}$ and $\eta=0$ we have $C_{L+n,\eta}(\rho)\neq0$ and
$B_{L+n,\eta}(\rho)\to0$ as $n\to\infty.$ Moreover, the product
$$\prod_{i=1}^n\frac{B_{L+i,\eta}(\rho)}{C_{L+i,\eta}(\rho)}=
\prod_{i=1}^n\frac{(L+i)\sqrt{(L+i+1)^2+\eta^2}}{(L+i+1)\sqrt{(L+i)^2+\eta^2}}=
\sqrt{\frac{1+\frac{\eta^2}{(L+n+1)^2}}{1+\frac{\eta^2}{(L+1)^2}}}$$
converges as $n\to\infty$ for all $L>-3/2,$ $L\neq-1,$ $\rho\in\mathbb{R}$ and
$\eta\in\mathbb{R}.$ We just need to check the uniform boundedness
of the Coulomb wave function with respect to $L+n.$ For this we use
the asymptotic relation $F_{L}(\eta,\rho)\sim C_L(\eta)\rho^{L+1}$
as $L\to\infty.$ Note that according to \cite[p. 538]{abra} and
\cite[p. 43]{nishi} for $L$ positive integer we have
$$C_L(\eta)=\frac{2^Le^{-\frac{\pi\eta}{2}}\left|\Gamma(L+1+\mathrm{i}\eta)\right|}{\Gamma(2L+2)}=
\left\{\begin{array}{lc}\frac{2^L}{(2L+1)!}\sqrt{\frac{2\pi\prod_{k=0}^L(k^2+\eta^2)}{\eta(e^{2\pi\eta}-1)}},&
\mbox{if}\ \ \ \eta\neq0\\\frac{2^LL!}{(2L+1)!},& \mbox{if}\ \ \
\eta=0\end{array}\right..$$ Thus, by using the infinite product
representation of the hyperbolic sine function \cite[p. 85]{abra} we
get that for fixed $\eta\in\mathbb{R}$ and $\rho>0$
$$C_L(\eta)\rho^{L+1}\to C_{L}(0)\rho^{L+1}\sqrt{\frac{2\sinh(\pi\eta)}{e^{2\pi\eta}-1}}=
\frac{\sqrt{\pi}\rho^{L+1}}{2^{L-1}\Gamma\left(L+\frac{3}{2}\right)}
e^{-\frac{\pi\eta}{2}}\to0\ \ \ \mbox{as}\ \ \ L\to \infty,$$ and
consequently
$$C_{L+n}(\eta)\rho^{L+n+1}\to0\ \ \ \mbox{as}\ \ \ n\to \infty.$$
Thus, applying \eqref{eqross}, the proof of \eqref{identcou} is
complete.

Now, let us focus on the Tur\'an type inequality
\eqref{coturan4}. If we choose $n=0$ in \eqref{identcou}, then we
obtain
$$
{}_1\Delta_{L,\eta}(\rho)=\left(1-\frac{C_{L+1,\eta}(\rho)}{C_{L,\eta}(\rho)}\right)F_{L}^2(\eta,\rho)-
\sum_{i=1}^{\infty}\frac{B_{L+1,\eta}(\rho)\dots
B_{L+i+1,\eta}(\rho)}{C_{L,\eta}(\rho)\dots
C_{L+i,\eta}(\rho)}\Theta(B_{L+i-1,\eta}(\rho)C_{L+i,\eta}(\rho))F_{L+i}^2(\eta,\rho).
$$
In what follows we show that
\begin{equation}\label{ineqross}\Theta(B_{L+i-1,\eta}(\rho)C_{L+i,\eta}(\rho))=
B_{L+i,\eta}(\rho)C_{L+i+1,\eta}(\rho)-B_{L+i-1,\eta}(\rho)C_{L+i,\eta}(\rho)\leq0\end{equation}
for all $L\geq0,$ $\eta\leq0,$ $\rho>0$ and $i\in\{1,2,\dots\}.$
Observe that the above inequality can be written as
$$\frac{(L+i)(L+i+2)\left((L+i+1)^2+\eta^2\right)}{(2L+2i+3)\left((L+i+1)(L+i+2)+\rho\eta\right)}
\leq\frac{(L+i-1)(L+i+1)\left((L+i)^2+\eta^2\right)}{(2L+2i-1)\left((L+i-1)(L+i)+\rho\eta\right)},$$
which by using the notation $\omega=L+i,$ can be rewritten as
$$\frac{\omega_1(\omega_2+\eta^2)}{\omega_3(\omega_4+\rho\eta)}\leq\frac{\omega_5(\omega_6+\eta^2)}{\omega_7(\omega_8+\rho\eta)}$$
where $\omega_1=\omega(\omega+2),$ $\omega_2=(\omega+1)^2,$
$\omega_3=2\omega+3,$ $\omega_4=(\omega+1)(\omega+2),$
$\omega_5=(\omega-1)(\omega+1),$ $\omega_6=\omega^2,$
$\omega_7=2\omega-1$ and $\omega_8=(\omega-1)\omega.$ Thus, in order
to show \eqref{ineqross} we need to verify the inequality
$$(\omega_3\omega_5-\omega_1\omega_7)\rho\eta^3+(\omega_3\omega_4\omega_5-\omega_1\omega_7\omega_8)\eta^2+
(\omega_3\omega_5\omega_6-\omega_1\omega_2\omega_7)\rho\eta+\omega_3\omega_4\omega_5\omega_6-\omega_1\omega_2\omega_7\omega_8\geq0,$$
where $L\geq0,$ $\eta\leq0,$ $\rho>0$ and $i\in\{1,2,\dots\}.$
Computations show that for all $L\geq0$ and $i\in\{1,2,\dots\}$ we
have
$$\left\{\begin{array}{l}\omega_3\omega_5-\omega_1\omega_7=-3<0\\
\omega_3\omega_4\omega_5-\omega_1\omega_7\omega_8=(\omega-1)(\omega+2)(8\omega^2+8\omega+3)\geq0\\
\omega_3\omega_5\omega_6-\omega_1\omega_2\omega_7=-2\omega(\omega+1)(2\omega^2+2\omega-1)<0\\
\omega_3\omega_4\omega_5\omega_6-\omega_1\omega_2\omega_7\omega_8=4(\omega-1)\omega^2(\omega+1)^2(\omega+2)\geq0\end{array}\right.,$$
which in turn implies the validity of inequality \eqref{ineqross}.

Now, by using the inequality \eqref{ineqross} we obtain
$${}_1\Delta_{L,\eta}(\rho)\geq\left(1-\frac{C_{L+1,\eta}(\rho)}{C_{L,\eta}(\rho)}\right)F_{L}^2(\eta,\rho),$$
where $L\geq0,$ $\eta\leq0$ and $\rho>0.$ On the other hand for
$\eta\leq0$ and $L\geq0$ the function
$$\rho\mapsto 1-\frac{C_{L+1,\eta}(\rho)}{C_{L,\eta}(\rho)}=1-\frac{(L+2)(2L+1)\sqrt{(L+1)^2+\eta^2}}{(L+1)(2L+3)\sqrt{L^2+\eta^2}}
\frac{L(L+1)+\rho\eta}{(L+1)(L+2)+\rho\eta}$$ is increasing on
$(0,\infty)$ and consequently for all $L\geq0,$ $\eta\leq0$ and
$\rho>0$ we have
$$1-\frac{C_{L+1,\eta}(\rho)}{C_{L,\eta}(\rho)}\geq\lim_{\rho\to0}\left[1-\frac{C_{L+1,\eta}(\rho)}{C_{L,\eta}(\rho)}\right]
=1-\frac{L(2L+1)\sqrt{(L+1)^2+\eta^2}}{(L+1)(2L+3)\sqrt{L^2+\eta^2}}$$
and this together with the above Tur\'an type inequality gives
\eqref{coturan4}.

Finally, let us consider the sharpness of \eqref{coturan4}. By using
the relation \cite[p. 538]{abra}
$$L(2L+1)C_L(\eta)=\sqrt{L^2+\eta^2}C_{L-1}(\eta),$$
we obtain
$$\lim_{\rho\to0}\frac{{}_1\Delta_{L,\eta}(\rho)}{F_{L}^2(\eta,\rho)}=1-\frac{C_{L-1}(\eta)C_{L+1}(\eta)}{C_{L}^2(\eta)}=
1-\frac{L(2L+1)\sqrt{(L+1)^2+\eta^2}}{(L+1)(2L+3)\sqrt{L^2+\eta^2}}$$
and this shows that the above constant (depending on $L$ and $\eta$)
is best possible in \eqref{coturan4}.
\end{proof}

\subsection{Mitrinovi\'c-Adamovi\'c and Wilker type inequalities for
Coulomb wave functions} Now, we present an immediate consequence of the Tur\'an type
inequality \eqref{coturan4}. For this consider the power series
representation of the Coulomb wave function, namely \cite[p.
538]{abra}
$$F_{L}(\eta,\rho)=C_{L}(\eta)\sum_{n\geq0}a_{L,n}\rho^{n+L+1},$$
where
$$a_{L,0}=1,\ a_{L,1}=\frac{\eta}{L+1}\ \ \ \mbox{and}\ \ \ a_{L,n}=\frac{2\eta a_{L,n-1}-a_{L,n-2}}{n(n+2L+1)},\ \ n\in\{2,3,\dots\}.$$
Observe that the Tur\'an type inequality \eqref{coturan4} is
equivalent to
\begin{equation}\label{coturan5}
\mathcal{F}_L^2(\eta,\rho)-\mathcal{F}_{L-1}(\eta,\rho)\mathcal{F}_{L+1}(\eta,\rho)\geq0,
\end{equation}
where $L,\rho>0,$ $\eta\leq0$ and $\mathcal{F}_L(\eta,\rho)$ stands
for the normalized regular Coulomb wave function, defined by
$$\mathcal{F}_L(\eta,\rho)=C_L^{-1}(\eta)\rho^{-L-1}{F}_L(\eta,\rho)=\sum_{n\geq0}a_{L,n}\rho^n.$$

\begin{theorem}\label{thlazar}
If $\eta\leq0,$ $L>-1$ and $0<\rho<x_{L,\eta,1},$ then the following
Mitrinovi\'c-Adamovi\'c and Wilker type inequalities are valid
\begin{equation}\label{lazarineq}
\left[\mathcal{F}_L(\eta,\rho)\right]^{L+\frac{3}{2}}<\left[\mathcal{F}_{L+1}(\eta,\rho)\right]^{L+\frac{5}{2}},
\end{equation}
\begin{equation}\label{wilkerineq}
\left[\mathcal{F}_{L+1}(\eta,\rho)\right]^{\frac{1}{L+\frac{3}{2}}}+\frac{\mathcal{F}_{L+1}(\eta,\rho)}{\mathcal{F}_L(\eta,\rho)}\geq2.
\end{equation}
\end{theorem}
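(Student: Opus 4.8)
The plan is to obtain the Wilker inequality \eqref{wilkerineq} as a formal consequence of the Mitrinovi\'c--Adamovi\'c inequality \eqref{lazarineq}, and to prove the latter by a monotonicity argument anchored at $\rho=0$. Throughout one uses that $\mathcal{F}_L(\eta,0)=a_{L,0}=1$ and that on $(0,x_{L,\eta,1})$ the three functions $\mathcal{F}_L,\mathcal{F}_{L+1},\mathcal{F}_{L+2}$ are positive (the least positive zero increases with the order), so that all logarithms and ratios below are well defined and $\log\mathcal{F}_L(\eta,\rho)<0$ when $\eta<0$. For the reduction, both summands in \eqref{wilkerineq} are positive, so the arithmetic--geometric mean inequality gives
\[
\left[\mathcal{F}_{L+1}(\eta,\rho)\right]^{\frac{1}{L+3/2}}+\frac{\mathcal{F}_{L+1}(\eta,\rho)}{\mathcal{F}_L(\eta,\rho)}\geq 2\sqrt{\left[\mathcal{F}_{L+1}(\eta,\rho)\right]^{\frac{1}{L+3/2}}\frac{\mathcal{F}_{L+1}(\eta,\rho)}{\mathcal{F}_L(\eta,\rho)}},
\]
and the radicand is $\geq 1$ precisely when $\left[\mathcal{F}_{L+1}\right]^{(L+5/2)/(L+3/2)}\geq\mathcal{F}_L$, i.e. (on raising to the power $L+3/2$) precisely when \eqref{lazarineq} holds. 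Hence it suffices to establish \eqref{lazarineq}.

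For \eqref{lazarineq} I would set $u(\rho)=(L+5/2)\log\mathcal{F}_{L+1}(\eta,\rho)-(L+3/2)\log\mathcal{F}_L(\eta,\rho)$, so that $u(0)=0$ and \eqref{lazarineq} is equivalent to $u>0$ on $(0,x_{L,\eta,1})$; it is therefore enough to show $u'\geq 0$. Writing $P_L=\mathcal{F}_L'/\mathcal{F}_L$ and using $\mathcal{F}_L=C_L^{-1}(\eta)\rho^{-L-1}F_L$ together with the recurrence \eqref{reccou2} (equivalently with \eqref{Comitt}), one gets the clean identity $(L+1)P_L=\eta-\sqrt{(L+1)^2+\eta^2}\,F_{L+1}/F_L$. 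Substituting this and its shift $L\mapsto L+1$ into $u'=(L+5/2)P_{L+1}-(L+3/2)P_L$, the two $\eta$--terms combine into the nonnegative quantity $-\tfrac{\eta}{2}/[(L+1)(L+2)]$ (here the hypothesis $\eta\leq 0$ is used), so that $u'\geq 0$ will follow once one shows
\[
\frac{F_{L+1}^2(\eta,\rho)}{F_L(\eta,\rho)F_{L+2}(\eta,\rho)}\geq\frac{(L+1)(L+5/2)\sqrt{(L+2)^2+\eta^2}}{(L+2)(L+3/2)\sqrt{(L+1)^2+\eta^2}}=:\tau.
\]

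The crucial point is that the plain Tur\'an inequality $F_{L+1}^2\geq F_LF_{L+2}$ is in general too weak, since $\tau$ may exceed $1$ (for $\eta=0$ one has $\tau=(L+5/2)/(L+3/2)>1$); one must instead invoke the \emph{sharp} Tur\'an inequality \eqref{coturan4} with $L$ replaced by $L+1$ (legitimate because $L+1>0$ and $\eta\leq 0$), which yields $F_{L+1}^2/(F_LF_{L+2})\geq\tau^*$ with $\tau^*=(L+2)(2L+5)\sqrt{(L+1)^2+\eta^2}\big/\big[(L+1)(2L+3)\sqrt{(L+2)^2+\eta^2}\big]$. It then remains only to verify $\tau^*\geq\tau$, and after cancelling the common factors and cross--multiplying this collapses to $[(L+2)^2-(L+1)^2]\eta^2=(2L+3)\eta^2\geq 0$, which is immediate for $L>-1$. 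This chain gives $u'\geq 0$, hence $u>0$ and \eqref{lazarineq}; strictness follows because \eqref{coturan4} is strict for $\rho>0$ (and, for $\eta<0$, already from the positive $\eta$--term). The only genuinely delicate matters are the bookkeeping of signs that makes the leftover $\eta$--term nonnegative, and the positivity of $F_{L+1},F_{L+2}$ on $(0,x_{L,\eta,1})$, which rests on the interlacing of the zeros of consecutive Coulomb wave functions.
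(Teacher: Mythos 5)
Your proposal is correct and takes essentially the same approach as the paper: the same logarithmic difference, differentiated via the recurrence \eqref{reccou2} (your identity for $(L+1)P_L$ is exactly the paper's \eqref{differcou}), the same nonnegative $\eta$-term, the key estimate supplied by the sharp Tur\'an inequality \eqref{coturan4} (which the paper invokes in its equivalent normalized form \eqref{coturan5}), and the Wilker inequality obtained by the same AM--GM reduction. Your comparison $\tau^*\geq\tau$ is precisely the paper's coefficient inequality $\left((L+1)^2+\eta^2\right)/(L+1)^2\geq\left((L+2)^2+\eta^2\right)/(L+2)^2$, so the two arguments differ only in bookkeeping (ratios of unnormalized $F$'s versus products of normalized $\mathcal{F}$'s).
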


We note that if we choose $\eta=0$ and $L+1/2=\nu$ in Theorem
\ref{thlazar}, then we reobtain the next Mitrinovi\'c-Adamovi\'c and
Wilker type inequalities \cite[Theorem 3]{bariczexpo}
$$\mathcal{J}_{\nu}^{\nu+1}(\rho)\leq\mathcal{J}_{\nu+1}^{\nu+2}(\rho)\ \ \ \mbox{and}\ \ \
\left[\mathcal{J}_{\nu+1}(\rho)\right]^{\frac{1}{\nu+1}}+\frac{\mathcal{J}_{\nu+1}(\rho)}{\mathcal{J}_{\nu}(\rho)}\geq2,$$
where $\nu>-1/2$ and $0<\rho<j_{\nu,1}.$ Here $x_{L,0,n}=j_{\nu,n}$
stands for the $n$th positive zero of the Bessel function $J_{\nu},$
and $\mathcal{J}_{\nu}$ stands for the normalized Bessel function,
defined by $\mathcal{F}_L(0,\rho)=\mathcal{J}_{\nu}(\rho)=2^{\nu}\Gamma(\nu+1)\rho^{-\nu}J_{\nu}(\rho).$
It is important to note here that the above inequalities are valid
for all $\nu>-1$ and the case $\nu=-1/2$ corresponds to the original
Mitrinovi\'c-Adamovi\'c and Wilker inequalities for sine and cosine
functions. See \cite{bariczexpo,basa,wu} for more details on Mitrinovi\'c-Adamovi\'c and Wilker
inequalities.

\begin{proof}[Proof of Theorem \ref{thlazar}]
Consider the function $\varphi_L(\eta,\rho),$ defined by
$$\varphi_L(\eta,\rho)=\left(L+\frac{5}{2}\right)\log\left[\mathcal{F}_{L+1}(\eta,\rho)\right]-
\left(L+\frac{3}{2}\right)\log\left[\mathcal{F}_{L}(\eta,\rho)\right].$$
Observe that the above function is well defined since for each
$\eta\leq0,$ $L>-1$ and $0<\rho<x_{L,\eta,1}$ we have
$$\mathcal{F}_{L}(\eta,\rho)>0\ \ \ \mbox{and}\ \ \ \mathcal{F}_{L+1}(\eta,\rho)>0.$$
Now, by using the recurrence relation \eqref{reccou2} we obtain
\begin{equation}\label{differcou}
\mathcal{F}_{L}'(\eta,\rho)=\frac{\eta}{L+1}\mathcal{F}_{L}(\eta,\rho)-\frac{(L+1)^2+\eta^2}{(L+1)^2(2L+3)}\rho\mathcal{F}_{L+1}(\eta,\rho),
\end{equation}
and consequently
$$2\varphi_{L}'(\eta,\rho)={\eta}\left(\frac{2L+5}{L+2}-\frac{2L+3}{L+1}\right)+
\frac{1}{\mathcal{F}_{L+1}^2(\eta,\rho)}\frac{\rho\mathcal{F}_{L+1}(\eta,\rho)}{\mathcal{F}_{L}(\eta,\rho)}\Phi_L(\eta,\rho),$$
where according to \eqref{coturan5}
\begin{align*}\Phi_L(\eta,\rho)&=\frac{(L+1)^2+\eta^2}{(L+1)^2}\mathcal{F}_{L+1}^2(\eta,\rho)-\frac{(L+2)^2+\eta^2}{(L+2)^2}
\mathcal{F}_{L}(\eta,\rho)\mathcal{F}_{L+2}(\eta,\rho)\\
&\geq
\frac{(L+2)^2+\eta^2}{(L+2)^2}\left[\mathcal{F}_{L+1}^2(\eta,\rho)-
\mathcal{F}_{L}(\eta,\rho)\mathcal{F}_{L+2}(\eta,\rho)\right]\geq0.\end{align*}
On the other hand, by using the Mittag-Leffler expansion \eqref{Comitt}
we obtain
$$\frac{\rho\mathcal{F}_{L+1}(\eta,\rho)}{\mathcal{F}_{L}(\eta,\rho)}=\frac{(L+1)^2(2L+3)}{(L+1)^2+\eta^2}\sum_{n\geq1}
\left[\frac{\rho}{x_{L,\eta,n}(x_{L,\eta,n}-\rho)}+
\frac{\rho}{y_{L,\eta,n}(y_{L,\eta,n}-\rho)}\right]>0,$$ where
$\eta\leq0,$ $L>-1$ and $0<\rho<x_{L,\eta,1}.$ These imply that for
those values of $\eta,\rho,L$ we have $\varphi_{L}'(\eta,\rho)\geq0$
and thus
$$\varphi_L(\eta,\rho)\geq\varphi_L(\eta,0)=0,$$
which completes the proof of \eqref{lazarineq}. Finally, the Wilker
type inequality \eqref{wilkerineq} follows immediately from the
inequality \eqref{lazarineq} and the arithmetic-geometric mean
inequality for the values
$\left[\mathcal{F}_{L+1}(\eta,\rho)\right]^{1/(L+3/2)}$ and
$\mathcal{F}_{L+1}(\eta,\rho)/\mathcal{F}_L(\eta,\rho),$ that is,
$$
\left[\mathcal{F}_{L+1}(\eta,\rho)\right]^{\frac{1}{L+\frac{3}{2}}}+\frac{\mathcal{F}_{L+1}(\eta,\rho)}{\mathcal{F}_L(\eta,\rho)}\geq
2\sqrt{\left[\mathcal{F}_{L+1}(\eta,\rho)\right]^{\frac{1}{L+\frac{3}{2}}}\cdot\frac{\mathcal{F}_{L+1}(\eta,\rho)}{\mathcal{F}_L(\eta,\rho)}}\geq2.$$
\end{proof}

\subsection{Some properties of Coulomb zeta functions} This subsection is devoted to the study of some functions involving the
positive and negative zeros of Coulomb wave functions. We give some basic properties, like recurrence relations, monotonicity properties
and we study the higher order derivatives of these functions. We note that some of the results were already obtained in \cite{stampach}, but here
we use a different approach.

For $s>1$ and $L,\eta\in\mathbb{R}$ let us consider functions
$X_{s,\eta}(L),$ $Y_{s,\eta}(L)$ and $\zeta_{s,\eta}(L),$ which we
call as the Coulomb zeta functions, defined by
$$X_{s,\eta}(L)=\sum_{n\geq1}\frac{1}{x_{L,\eta,n}^{s}},\ \ Y_{s,\eta}(L)=\sum_{n\geq1}\frac{1}{y_{L,\eta,n}^{s}}
\ \ \ \mbox{and}\ \ \
\zeta_{s,\eta}(L)=X_{s,\eta}(L)+Y_{s,\eta}(L).$$ By using the Mittag-Leffler expansion \eqref{Comitt} we obtain for all
$0<\rho<\min\{x_{L,\eta,1},-y_{L,\eta,1}\}$ the generating function
for the Coulomb zeta functions as follows
\begin{align*}
\frac{\rho
F_{L+1}(\eta,\rho)}{F_L(\eta,\rho)}&=\frac{L+1}{\sqrt{(L+1)^2+\eta^2}}\sum_{n\geq1}
\left[\frac{\left(\displaystyle\frac{\rho}{x_{L,\eta,n}}\right)^2}{1-\displaystyle\frac{\rho}{x_{L,\eta,n}}}+
\frac{\left(\displaystyle\frac{\rho}{y_{L,\eta,n}}\right)^2}{1-\displaystyle\frac{\rho}{y_{L,\eta,n}}}\right]\\
&=\frac{L+1}{\sqrt{(L+1)^2+\eta^2}}\sum_{n\geq1}\left[\sum_{m\geq0}\left(\displaystyle\frac{\rho}{x_{L,\eta,n}}\right)^{m+2}+
\sum_{m\geq0}\left(\frac{\rho}{y_{L,\eta,n}}\right)^{m+2}\right]\\
&=\frac{L+1}{\sqrt{(L+1)^2+\eta^2}}\sum_{m\geq0}\left[X_{m+2,\eta}(L)+Y_{m+2,\eta}(L)\right]\rho^{m+2},
\end{align*}
that is, we have
\begin{equation}\label{generator}
\frac{F_{L+1}(\eta,\rho)}{\rho
F_L(\eta,\rho)}=\frac{L+1}{\sqrt{(L+1)^2+\eta^2}}\sum_{m\geq0}\zeta_{m+2,\eta}(L)\rho^{m}.
\end{equation}
Let us suppose that $\eta=0.$ Then
$x_{L,0,n}=-y_{L,0,n}=j_{L+1/2,n}$ for all $n\in\{1,2,\dots\}$ and
the formula \eqref{generator} reduces to
$$\frac{\rho J_{L+3/2}(\rho)}{J_{L+1/2}(\rho)}=\sum_{m\geq0}\zeta_{m+2,0}(L)\rho^{m+2}=
2\sum_{k\geq1}\left[\sum_{n\geq1}\frac{1}{j_{L+1/2,n}^{2k}}\right]\rho^{2k}.$$
Now, let $L+1/2$ be denoted by $\nu,$ then for $|\rho|<j_{\nu,1}$ we
obtain the Kishore's formula \cite[p. 528]{kishore}
$$\frac{\rho J_{\nu+1}(\rho)}{2J_{\nu}(\rho)}=\sum_{k\geq 1}\sigma_{2k}(\nu)\rho^{2k},$$
where
$$\sigma_{2k}(\nu)=X_{2k,0}(\nu-1/2)=\sum_{n\geq1}\frac{1}{j_{\nu,n}^{2k}}$$
is the so-called Rayleigh function. Observe that
$$\lim_{\rho\to0}\left[\frac{F_{L+1}(\eta,\rho)}{\rho
F_L(\eta,\rho)}\right]=\frac{C_{L+1}(\eta)}{C_L(\eta)}=\frac{\sqrt{(L+1)^2+\eta^2}}{(L+1)(2L+3)},$$
and consequently if $\rho\to0$ in \eqref{generator}, then we obtain
$$\zeta_{2,\eta}(L)=\frac{(L+1)^2+\eta^2}{(L+1)^2(2L+3)}.$$
It is also worth to mention that if we use \eqref{generator} and the
power series representation of the Coulomb wave function, then we
obtain
\begin{align*}C_{L+1}(\eta)&\left(a_{L+1,0}+a_{L+1,1}\rho+{\dots}+a_{L+1,n}\rho^n+{\dots}\right)=
\frac{L+1}{\sqrt{(L+1)^2+\eta^2}}C_L(\eta)\\&\times\left(a_{L,0}+a_{L,1}\rho+{\dots}+a_{L,n}\rho^n+{\dots}\right)
\left(\zeta_{2,\eta}(L)+\zeta_{3,\eta}(L)\rho+{\dots}+\zeta_{n+2,\eta}(L)\rho^n+{\dots}\right),
\end{align*}
and identifying the coefficients of $\rho^n$ on both sides we arrive
at the recurrence relation
$$\zeta_{2,\eta}(L)a_{L+1,n}=\sum_{k=0}^na_{L,k}\zeta_{n-k+2,\eta}(L), \ \ \ n\in\{0,1,\dots\}.$$
By using the above relation for $n=1$ we obtain
$$\zeta_{3,\eta}(L)=-\eta\frac{(L+1)^2+\eta^2}{(L+1)^3(L+2)(2L+3)},$$
and other values of $\zeta_{m,\eta}(L)$ can be computed also for
$m\in\{4,5,\dots\}.$ Moreover, by using the relations
\eqref{reccou2} and \eqref{generator} we obtain
$$\frac{F_L'(\eta,\rho)}{F_L(\eta,\rho)}=\frac{L+1}{\rho}+\frac{\eta}{L+1}-\sum_{m\geq0}\zeta_{m+2,\eta}(L)\rho^{m+1}$$
and taking this in \eqref{eqdiffercou} and identifying the
coefficients of $\rho^m$ on both sides we obtain
\begin{equation}\label{recurzeta}(m+2L+3)\zeta_{m+2,\eta}(L)+\frac{2\eta}{L+1}\zeta_{m+1,\eta}(L)=\sum_{k=2}^m\zeta_{k,\eta}(L)\zeta_{m-k+2,\eta}(L),
\ \ \ m\in\{2,3,\dots\}.\end{equation} Observe that the above result
implies that the Coulomb zeta functions are actually rational
functions of $L.$ We mention that the above results were obtained also by \v{S}tampach and \v{S}\v{t}ov\'\i\v{c}ek \cite{stampach}, however, they used a different approach.

Now, we are ready to prove the following new result by using \eqref{recurzeta}.

\begin{theorem}\label{thcouzeta}
If $\eta\leq0$ and $m\in\{2,3,\dots\},$ then the Coulomb zeta
function $L\mapsto \zeta_{m,\eta}(L),$ as well as the functions
$L\mapsto
(m+2L+3)\zeta_{m+2,\eta}(L)+{2\eta}\zeta_{m+1,\eta}(L)/(L+1),$
$L\mapsto \zeta_{m,\eta}(L)/\zeta_{2,\eta}(L)$ and $L\mapsto
(2L+3)^{m-1}\zeta_{m,\eta}(L)$ are completely monotonic on
$(-1,\infty).$
\end{theorem}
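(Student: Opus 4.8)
The plan is to run everything off the nonlinear recurrence \eqref{recurzeta} together with two elementary closure facts about completely monotonic functions: first, that $L\mapsto 1/(L+c)^{p}$ is completely monotonic on $(-1,\infty)$ whenever $c\ge 1$ and $p>0$, so in particular $1/(L+1)^{p}$, $1/(L+2)^{p}$, $1/(2L+3)^{p}=2^{-p}/(L+\tfrac32)^{p}$ and $1/(m+2L+3)^{p}=2^{-p}/(L+\tfrac{m+3}{2})^{p}$ all qualify; and second, that nonnegative linear combinations and products of completely monotonic functions are again completely monotonic. The hypothesis $\eta\le 0$ enters at exactly one place: it makes $-2\eta/(L+1)$ a nonnegative multiple of a completely monotonic function, which keeps the sign bookkeeping in \eqref{recurzeta} consistent.

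First I would prove the assertion for $L\mapsto\zeta_{m,\eta}(L)$ by induction on $m$. The base cases $m=2$ and $m=3$ follow from the explicit formulas: splitting the numerator $(L+1)^2+\eta^2$ into its two summands exhibits $\zeta_{2,\eta}$ and $\zeta_{3,\eta}$ (the latter with the overall factor $-\eta\ge0$) as sums of products of the admissible blocks $1/(L+1)^p$, $1/(L+2)^p$, $1/(2L+3)^p$, hence completely monotonic. For the step I solve \eqref{recurzeta} for the top term,
\begin{equation*}
\zeta_{m+2,\eta}(L)=\frac{1}{m+2L+3}\left[\sum_{k=2}^{m}\zeta_{k,\eta}(L)\zeta_{m-k+2,\eta}(L)-\frac{2\eta}{L+1}\zeta_{m+1,\eta}(L)\right],
\end{equation*}
where every ingredient on the right is completely monotonic ($1/(m+2L+3)$ by the first fact; each $\zeta_{k,\eta}$, $\zeta_{m+1,\eta}$ and each product by the induction hypothesis and the product rule; and $-2\eta\,\zeta_{m+1,\eta}/(L+1)$ because $-2\eta\ge0$), so $\zeta_{m+2,\eta}$ is completely monotonic. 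The second assertion is then immediate and needs no new induction: by \eqref{recurzeta} the function $(m+2L+3)\zeta_{m+2,\eta}+2\eta\,\zeta_{m+1,\eta}/(L+1)$ is literally $\sum_{k=2}^{m}\zeta_{k,\eta}\zeta_{m-k+2,\eta}$, a finite sum of products of functions already shown completely monotonic.

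The third assertion I would prove by the same induction carried one level up, dividing \eqref{recurzeta} through by $\zeta_{2,\eta}$. The base cases are $\zeta_{2,\eta}/\zeta_{2,\eta}=1$ and the clean simplification $\zeta_{3,\eta}/\zeta_{2,\eta}=-\eta/((L+1)(L+2))$, which is $-\eta\ge0$ times a product of admissible blocks. For the step, $\zeta_{m+2,\eta}/\zeta_{2,\eta}$ equals $1/(m+2L+3)$ times a sum of terms $\zeta_{k,\eta}\cdot(\zeta_{m-k+2,\eta}/\zeta_{2,\eta})$ and $-2\eta/(L+1)\cdot(\zeta_{m+1,\eta}/\zeta_{2,\eta})$, in which the factors $\zeta_{k,\eta}$ are completely monotonic by the first assertion and the ratios by the induction hypothesis, so the argument closes exactly as before.

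The fourth assertion is where I expect the real difficulty, and it will not yield to the same mechanism. Setting $g_{m}(L)=(2L+3)^{m-1}\zeta_{m,\eta}(L)$ and multiplying \eqref{recurzeta} by $(2L+3)^{m+1}$ turns it into
\begin{equation*}
g_{m+2}(L)=\frac{2L+3}{m+2L+3}\left[\sum_{k=2}^{m}g_{k}(L)g_{m-k+2}(L)-\frac{2\eta}{L+1}g_{m+1}(L)\right].
\end{equation*}
The $\eta$-term is still harmless, since $\tfrac{2L+3}{(m+2L+3)(L+1)}$ has a partial fraction expansion with nonnegative coefficients and multiplies the completely monotonic factor $-2\eta\,g_{m+1}\ge0$. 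The obstruction is the bare prefactor $\tfrac{2L+3}{m+2L+3}=1-\tfrac{m}{m+2L+3}$: it is increasing, hence only a Bernstein function, not completely monotonic, and the product of a Bernstein function with a completely monotonic one need not be completely monotonic, so the product rule that drove the first three parts is unavailable. To attack this I would pass to the Hausdorff--Bernstein--Widder representation in $s=L+1$, represent the completely monotonic bracket by a nonnegative measure, and try to show that the density obtained after multiplication by $\tfrac{2s+1}{2s+1+m}$ remains nonnegative; this density positivity is the crux. I would test it first in the Bessel limit $\eta=0$, where $\zeta_{m,0}$ reduces to Rayleigh sums and, for instance, $g_{4}(L)=\tfrac{2L+3}{2L+5}$ — a computation that already exposes how delicate the weighting is and signals that this last item demands a genuinely separate and more careful treatment than the preceding three.
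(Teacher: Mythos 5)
Your arguments for the first three assertions are correct and coincide with the paper's own proof: the same base cases read off from the explicit formulas for $\zeta_{2,\eta}$ and $\zeta_{3,\eta}$, the same induction obtained by solving \eqref{recurzeta} for $\zeta_{m+2,\eta}$, the second assertion read off directly from \eqref{recurzeta} together with the first, and the same induction on the ratios $\zeta_{m,\eta}/\zeta_{2,\eta}$ (the paper starts that induction from $\zeta_{3,\eta}/\zeta_{2,\eta}$ and $\zeta_{4,\eta}/\zeta_{2,\eta}$; your choice of base cases is an immaterial variation).

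For the fourth assertion you stopped one step short of the right conclusion: what you found is not a sign of delicacy but a counterexample. At $\eta=0$ (allowed, since the hypothesis is $\eta\leq0$) and $m=4$ you computed $(2L+3)^{3}\zeta_{4,0}(L)=\frac{2L+3}{2L+5}$, which is strictly increasing on $(-1,\infty)$, whereas every completely monotonic function is nonincreasing. So the fourth assertion is \emph{false as stated}, and no Hausdorff--Bernstein--Widder analysis can rescue it; the exponent $m-1$ is simply too large. Your identity
\begin{equation*}
g_{m+2}(L)=\frac{2L+3}{m+2L+3}\left[\sum_{k=2}^{m}g_{k}(L)g_{m-k+2}(L)-\frac{2\eta}{L+1}\,g_{m+1}(L)\right]
\end{equation*}
is the correct one, and it locates exactly where the paper's own proof breaks down: the relation displayed in the paper for this part is your identity with the factor $2L+3$ silently dropped---as written, its right-hand side equals $(2L+3)^{m}\zeta_{m+2,\eta}(L)$, not the claimed $(2L+3)^{m+1}\zeta_{m+2,\eta}(L)$---and only because of that error does its induction appear to close. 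The paper even carries the warning sign: it asserts that at $\eta=0$ this assertion reduces to Obi's theorem that $\nu\mapsto(\nu+1)^{q}\sigma_{2q}(\nu)$ is completely monotonic, but Obi's exponent corresponds to $(2L+3)^{m/2}$, not $(2L+3)^{m-1}$; the two agree only for $m=2$.

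The statement can be repaired, and your identity is the right tool. Claim instead that $L\mapsto(2L+3)^{\lceil m/2\rceil}\zeta_{m,\eta}(L)$ is completely monotonic on $(-1,\infty)$ for $\eta\leq0$. The base cases are $(2L+3)\zeta_{2,\eta}(L)=1+\eta^{2}/(L+1)^{2}$ and $(2L+3)^{2}\zeta_{3,\eta}(L)$, both completely monotonic. Redefining $g_{m}=(2L+3)^{\lceil m/2\rceil}\zeta_{m,\eta}$, the analogue of your identity gives each product term the prefactor $(2L+3)^{-j}/(m+2L+3)$ with $j=\lceil k/2\rceil+\lceil(m+2-k)/2\rceil-\lceil(m+2)/2\rceil\geq0$, and gives the $\eta$-term the prefactor $(2L+3)^{\delta}/\left[(L+1)(m+2L+3)\right]$ with $\delta\in\{0,1\}$; all of these are completely monotonic, since
\begin{equation*}
\frac{2L+3}{(L+1)(m+2L+3)}=\frac{1}{(m+1)(L+1)}+\frac{2m}{(m+1)(m+2L+3)}
\end{equation*}
is a positive combination of completely monotonic functions. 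Thus the induction closes, no increasing factor is ever left over, and at $\eta=0$ the corrected statement recovers precisely Obi's result.
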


Let $\eta=0.$ Then $x_{L,0,n}=-y_{L,0,n}=j_{L+1/2,n}$ for all
$n\in\{1,2,\dots\}$ and
$$\zeta_{s,0}(L)=\sum_{n\geq1}\frac{(-1)^s+1}{(-1)^s}\frac{1}{j_{L+1/2,n}^s}.$$
Observe that for all $s>1$ we have $\zeta_{2s,0}(L)=2X_{2s,0}(L)$
and $\zeta_{2s-1,0}(L)=0.$ Now, taking $m=2r$ in \eqref{recurzeta}
we obtain
$$(2r+2L+3)\zeta_{2r+2,0}(L)=\sum_{k=1}^{r}\zeta_{2k,0}(L)\zeta_{2r-2k+2,0}(L),$$
and if we let $L+1/2=\nu$ and $r+1=q,$ then the above relation
becomes
$$(\nu+q)\sigma_{2q}(\nu)=\sum_{k=1}^{q-1}\sigma_{2k}(\nu)\sigma_{2q-2k}(\nu),$$
which is the result of Kishore \cite[p. 532]{kishore}. We also note
here that in particular when $\eta=0$ the results of Theorem
\ref{thcouzeta} reduce to the main results of Obi \cite[p.
466]{obico} concerning the complete monotonicity of the functions
$\nu\mapsto \sigma_{2q}(\nu),$ $\nu\mapsto
(\nu+1)^q\sigma_{2q}(\nu)$ and $(\nu+q)\sigma_{2q}(\nu)$ on
$(-1/2,\infty),$ where $q\in\{1,2,\dots\}.$

\begin{proof}[Proof of Theorem \ref{thcouzeta}]
Since the sum and product of completely monotonic functions are also
completely monotonic, we have that for $\eta\leq0$ the
functions $L\mapsto\zeta_{2,\eta}(L)$ and $L\mapsto
\zeta_{3,\eta}(L)$ are completely monotonic on $(-1,\infty).$ On the
other hand, from \eqref{recurzeta} we have
$$\zeta_{m+2,\eta}(L)=-\frac{2\eta}{(L+1)(m+2L+3)}\zeta_{m+1,\eta}(L)+\frac{1}{m+2L+3}\sum_{k=2}^m\zeta_{k,\eta}(L)\zeta_{m-k+2,\eta}(L),
\ \ \ m\in\{2,3,\dots\}.$$ Thus, if we suppose that
$L\mapsto\zeta_{s,\eta}(L)$ is completely monotonic on $(-1,\infty)$
for each $s\in\{2,3,\dots,m+1\},$ then by induction we get that
$L\mapsto\zeta_{m+2,\eta}(L)$ is also completely monotonic on
$(-1,\infty).$

Similarly, the functions $L\mapsto (2L+3)\zeta_{2,\eta}(L)$ and
$L\mapsto (2L+3)^2\zeta_{3,\eta}(L)$ are clearly completely
monotonic on $(-1,\infty)$ for all $\eta\leq0.$ Supposing that
$L\mapsto(2L+3)^{s-1}\zeta_{s,\eta}(L)$ is completely monotonic on
$(-1,\infty)$ for each $s\in\{2,3,\dots,m+1\},$ the relation
\begin{align*}(2L+3)^{m+1}&\zeta_{m+2,\eta}(L)=-\frac{2\eta(2L+3)^{m}}{(L+1)(m+2L+3)}\zeta_{m+1,\eta}(L)\\&+
\frac{1}{m+2L+3}\sum_{k=2}^m\left[(2L+3)^{k-1}\zeta_{k,\eta}(L)\right]\left[(2L+3)^{m-k+1}\zeta_{m-k+2,\eta}(L)\right],
\ \ \ m\in\{2,3,\dots\}.\end{align*} and complete mathematical
induction imply that $L\mapsto(2L+3)^{m+1}\zeta_{m+2,\eta}(L)$ is
also completely monotonic on $(-1,\infty).$

Observe that for $\eta\leq0$ the functions
$$L\mapsto \frac{\zeta_{3,\eta}(L)}{\zeta_{2,\eta}(L)}=-\frac{\eta}{(L+1)(L+2)},$$
$$L\mapsto \frac{\zeta_{4,\eta}(L)}{\zeta_{2,\eta}(L)}=\frac{(L+2)(L+1)^2+(5L+8)\eta^2}{(L+1)^2(L+2)(2L+3)(2L+5)}$$
are completely monotonic on $(-1,\infty).$ If $L\mapsto
\zeta_{s,\eta}(L)/\zeta_{2,\eta}(L)$ is completely monotonic on
$(-1,\infty)$ for $s\in\{2,3,\dots,m+1\},$ then in view of
$$\frac{\zeta_{m+2,\eta}(L)}{\zeta_{2,\eta}(L)}=-\frac{2\eta}{(L+1)(m+2L+3)}\frac{\zeta_{m+1,\eta}(L)}{\zeta_{2,\eta}(L)}+
\frac{1}{m+2L+3}\sum_{k=2}^m\frac{\zeta_{k,\eta}(L)}{\zeta_{2,\eta}(L)}\zeta_{m-k+2,\eta}(L),
\ \ \ m\in\{2,3,\dots\}$$ and by using the fact that
$L\mapsto\zeta_{s,\eta}(L)$ is completely monotonic on $(-1,\infty)$
for all $s\in\{2,3,\dots,m\},$ by using mathematical induction we
obtain that $L\mapsto \zeta_{m+2,\eta}(L)/\zeta_{2,\eta}(L)$ is also
completely monotonic on $(-1,\infty).$

Finally, the first part of this theorem together with
\eqref{recurzeta} imply that the function
$$L\mapsto (m+2L+3)\zeta_{m+2,\eta}(L)+{2\eta}\zeta_{m+1,\eta}(L)/(L+1)$$
is also completely monotonic on $(-1,\infty)$ for all
$m\in\{2,3,\dots\}$ and $\eta\leq0.$
\end{proof}

\subsection{Interlacing properties of the zeros of Coulomb wave functions} The first part of the next result is the
extension of a result of Miyazaki et al. \cite[Remark 4.3]{miyazaki}, which states that if $\rho>0,$
$\eta\in\mathbb{R}$ and $L\in\{1,2,\dots\},$ then there is one and only one zero of $\rho\mapsto F_L'(\eta,\rho)$ between two continuous zeros of $\rho\mapsto F_L(\eta,\rho).$

\begin{theorem}
If $L>-1/2$ and $\eta\in\mathbb{R},$ then the zeros of $\rho\mapsto F_L(\eta,\rho)$ and $\rho\mapsto F_L'(\eta,\rho)$ are interlacing. Moreover, if $L>-1$ and $\eta\in\mathbb{R},$ then the zeros of $\rho\mapsto F_L(\eta,\rho)$ and $\rho\mapsto \rho F_L'(\eta,\rho)-(L+1)F_L(\eta,\rho)$ are interlacing.
\end{theorem}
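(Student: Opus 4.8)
The plan is to reduce each of the two interlacing assertions to the monotonicity of an auxiliary meromorphic function whose poles are exactly the zeros of $\rho\mapsto F_L(\eta,\rho)$ --- which is precisely what the Hadamard factorization \eqref{infprod} and the Mittag--Leffler expansion \eqref{Comitt} are tailored to produce. For the first assertion the natural object is the logarithmic derivative $h(\rho)=F_L'(\eta,\rho)/F_L(\eta,\rho)$, whose zeros are the zeros of $F_L'$. For the second assertion I would instead pass to the normalized function $\mathcal F_L$, or equivalently to the ratio $F_{L+1}/F_L$ together with the identity coming from \eqref{reccou2}.

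For the second assertion I would first do the algebraic simplification. Writing $F_L=C_L(\eta)\rho^{L+1}\mathcal F_L(\eta,\rho)$ and differentiating gives immediately $\rho F_L'(\eta,\rho)-(L+1)F_L(\eta,\rho)=C_L(\eta)\rho^{L+2}\mathcal F_L'(\eta,\rho)$, so that away from the origin the zeros of $\rho\mapsto\rho F_L'(\eta,\rho)-(L+1)F_L(\eta,\rho)$ are exactly the zeros of $\mathcal F_L'$, while the zeros of $F_L$ are the zeros of $\mathcal F_L$. The claim thus becomes the statement that the zeros of $\mathcal F_L$ and of $\mathcal F_L'$ interlace. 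Since \eqref{infprod} exhibits $\mathcal F_L$ as a real entire function of genus one with only real zeros, it lies in the Laguerre--P\'olya class; hence $\mathcal F_L'$ also has only real zeros and the zeros of $\mathcal F_L$ and $\mathcal F_L'$ strictly interlace, which is the assertion for $L>-1$. Equivalently one may use \eqref{reccou2} to write $\rho F_L'-(L+1)F_L=\frac{\rho}{L+1}\big[\eta F_L-\sqrt{(L+1)^2+\eta^2}\,F_{L+1}\big]$, reducing the zeros to the solutions of $F_{L+1}/F_L=\eta/\sqrt{(L+1)^2+\eta^2}$, and then invoke that $F_{L+1}/F_L$ is strictly increasing between consecutive zeros of $F_L$: its derivative is $(F_{L+1}'F_L-F_{L+1}F_L')/F_L^2$, whose numerator is positive by the Wronskian-type identity \cite[Lemma 2.4]{miyazaki} used above, so a constant level is attained exactly once in each gap.

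For the first assertion I would work with $h=F_L'/F_L$ directly. By \eqref{logder} one has $h'(\rho)=-\frac{L+1}{\rho^2}-\sum_{n\ge1}\big[(x_{L,\eta,n}-\rho)^{-2}+(y_{L,\eta,n}-\rho)^{-2}\big]$, which is strictly negative for every $\rho\neq0$ as soon as $L+1>0$. Near a simple zero of $F_L$ we have $h(\rho)\sim(\rho-\rho_0)^{-1}$ and near the origin $h(\rho)\sim(L+1)/\rho$; hence on each interval between two consecutive poles of $h$ the function decreases from $+\infty$ to $-\infty$ and therefore vanishes exactly once. This yields exactly one zero of $F_L'$ between consecutive zeros of $F_L$, and by strict monotonicity at most one, so the real zeros interlace. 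To promote this to the full statement I would verify that $-h$ is a Herglotz function: the term $-(L+1)/\rho$ and each term $\rho/[\rho_{L,\eta,n}(\rho_{L,\eta,n}-\rho)]$ have strictly positive imaginary part on the upper half-plane when $L+1>0$, so $\operatorname{Im}(-h(\rho))>0$ there, whence $F_L'$ has no non-real zeros. The hypothesis $L>-1/2$ secures $L+1>0$ with room to spare.

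The step I expect to be the main obstacle is the first assertion, specifically the bookkeeping of the pole that the prefactor $\rho^{L+1}$ forces into $h$ at the origin: this is a simple pole of $h$ with residue $L+1$ even though, for $L>0$, the function $F_L'$ itself vanishes at the origin, so one must treat $0$ consistently as a pole of $h$ (a zero of $F_L$) rather than as a zero of $F_L'$, and count the two adjacent gaps accordingly. Establishing the Herglotz property --- equivalently, ruling out stray non-real zeros of $F_L'$ --- is exactly where the lower bound on $L$ is spent; the real-axis monotonicity of $h$ alone controls only the real zeros. The second assertion, by contrast, is essentially immediate once the reduction to $\mathcal F_L'$ is in place.
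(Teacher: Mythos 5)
Your proposal is correct, and on the second assertion it is essentially the paper's own argument: the same reduction $\rho F_L'(\eta,\rho)-(L+1)F_L(\eta,\rho)=C_L(\eta)\rho^{L+2}\mathcal{F}_L'(\eta,\rho)$, followed by the observation that \eqref{infprod} places $\mathcal{F}_L$ in the Laguerre--P\'olya class; the paper phrases this slightly more laboriously (Hadamard's theorem plus a growth-order estimate $1\leq\tau_C<2$ to get genus $0$ or $1$, then Laguerre's separation theorem), and it cites \cite[Proposition 13]{stampach} for the reality of the zeros of $F_L$, a fact you use implicitly when reading ``only real zeros'' off \eqref{infprod}. The genuine difference is in the first assertion. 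The paper runs the same real-axis argument as you do --- $F_L'/F_L$ strictly decreasing between consecutive zeros of $F_L$ by \eqref{logder}, blowing up at the endpoints --- but it rules out non-real zeros of $F_L'$ by citing \cite[Remark 17]{stampach} (reality and simplicity of the zeros of $F_L'$ for $L>-1/2$); that citation is the only place where the hypothesis $L>-1/2$ enters. You instead prove the missing reality statement yourself via the Herglotz property of $-F_L'/F_L$: each summand $\rho/[\rho_{L,\eta,n}(\rho_{L,\eta,n}-\rho)]=1/(\rho_{L,\eta,n}-\rho)-1/\rho_{L,\eta,n}$ and the term $-(L+1)/\rho$ have positive imaginary part in the upper half-plane once $L+1>0$, so $F_L'/F_L$ has no zeros there, and conjugation symmetry handles the lower half-plane; since all zeros of $F_L$ are real, $F_L'$ has no non-real zeros. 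This buys two things: the proof becomes self-contained where the paper leans on \v{S}tampach--\v{S}\v{t}ov\'\i\v{c}ek, and it yields the first assertion under the weaker hypothesis $L>-1$, which is consistent with the classical Dini-type result at $\eta=0$ (the zeros of $\rho J_\nu'(\rho)+\tfrac{1}{2}J_\nu(\rho)$ are all real exactly when $\nu+\tfrac{1}{2}=L+1>0$). Your handling of the origin --- counting $\rho=0$ as a pole of $F_L'/F_L$ even though $F_L'$ itself vanishes there when $L>0$ --- is the right bookkeeping. One small caveat: your ``equivalent'' route to the second assertion via the Wronskian identity of \cite[Lemma 2.4]{miyazaki} controls only the real zeros of $\rho F_L'-(L+1)F_L$, so on its own it is incomplete without a Herglotz- or Laguerre--P\'olya-type argument excluding non-real ones; your primary route does not have this defect.
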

\begin{proof}
In view of \eqref{logder}, for $L>-1$ the function $\rho\mapsto F_L'(\eta,\rho)/F_L(\eta,\rho)$ is decreasing on the interval $(x_{L,\eta,k},x_{L,\eta,k+1}),$ where $k\in\{1,2,\dots\}.$ Moreover, the expression $F_L'(\eta,\rho)/F_L(\eta,\rho)$ tends to $-\infty$ as $\rho\nearrow x_{L,\eta,k+1}$ and tends to $\infty$ as $\rho\searrow x_{L,\eta,k}.$ Since \cite[Remark 17]{stampach} for $L>-1/2$ and $\eta\in\mathbb{R}$ the zeros of $\rho\mapsto F_L'(\eta,\rho)$ are real and simple, it follows that $\rho\mapsto F_L'(\eta,\rho)/F_L(\eta,\rho)$ intersects once and only once the horizontal axis, and the abscissa of the intersection point is actually the $k$th positive zero of $\rho\mapsto F_L'(\eta,\rho).$ The interlacing property of the negative zeros is similar, and thus we omit the details.

Hadamard's theorem states that an entire function of finite order $\tau$ may be represented in the form
$$f(z)=z^me^{P_q(z)}\prod_{n\geq1}G\left(\frac{z}{a_n},p\right),$$
where $a_1,a_2,\dots$ are all nonzero roots of $f(z),$ $p\leq\tau,$ $P_q(z)$ is a polynomial in $z$ of degree $q\leq\tau,$ $m$ is the multiplicity of the root at the origin, and $G(u,p)=(1-u)e^{u+\frac{u^2}{2}+{\dots}+\frac{u^p}{p}}$ for $p>0.$ Combining this with \eqref{infprod} it follows that the growth order $\tau_C$ of the normalized entire Coulomb wave function $\rho\mapsto \mathcal{F}_{L}(\eta,\rho)$ satisfies $1\leq\tau_C<2.$ It is known that the genus of an entire function of order $\tau$ is $[\tau]$ when $\tau$ is not an integer, but the genus of an entire function of natural order $\tau$ can be either $\tau$ or $\tau-1.$ Thus, the normalized entire Coulomb wave function $\rho\mapsto \mathcal{F}_{L}(\eta,\rho)$ is of genus $0$ or $1.$ On the other hand, Laguerre's theorem on separation of zeros states that, if $z\mapsto f(z)$ is an entire function, not a constant,
which is real for real $z$ and has only real zeros, and is of genus $0$ or $1,$ then the zeros of $f'$
are also real and are separated by the zeros of $f.$ According to \cite[Proposition 13]{stampach} when $L>-1$ and $\eta\in\mathbb{R}$ the zeros of $\rho\mapsto \mathcal{F}_L(\eta,\rho)$ are all real. Thus, appealing on Laguerre's separation theorem we conclude that when $L>-1$ and $\eta\in\mathbb{R},$ then the zeros of $\rho\mapsto \rho F_L'(\eta,\rho)-(L+1)F_L(\eta,\rho)$ are all real and are interlacing with the zeros of $\rho\mapsto F_L(\eta,\rho).$
\end{proof}

\end{document}